\newcommand\CA{{\mathcal A}}
\newcommand\CIF{{\mathcal {IF}}}
\newcommand\BBC{{\mathbb C}}
\newcommand\BBN{{\mathbb N}}
\newcommand\BBZ{{\mathbb Z}}
\newcommand {\GAP}{\textsf{GAP}}  
\newcommand {\CHEVIE}{\textsf{CHEVIE}}
\newcommand\codim{\operatorname{codim}}
\newcommand\Der{{\operatorname{Der}}}
\newcommand\Fix{{\operatorname{Fix}}}
\newcommand\GL{\operatorname{GL}}
\newcommand\pdeg{\operatorname{pdeg}}
\newcommand\inverse{^{-1}}
\numberwithin{equation}{section}
\theoremstyle{plain}
\newtheorem{lemma}[equation]{Lemma}
\newtheorem{theorem}[equation]{Theorem}
\newtheorem{proposition}[equation]{Proposition}
\theoremstyle{definition}
\newtheorem{defn}[equation]{Definition}
\newtheorem{remark}[equation]{Remark}
\subjclass[2010]{Primary 20F55, 52B30, 52C35, 14N20; Secondary 13N15}
\begin{document}

\title[On supersolvable reflection arrangements]
{On supersolvable reflection arrangements}


\author[T. Hoge]{Torsten Hoge}
\address
{Fakult\"at f\"ur Mathematik,
Ruhr-Universit\"at Bochum,
D-44780 Bochum, Germany}
\email{torsten.hoge@rub.de}

\author[G. R\"ohrle]{Gerhard R\"ohrle}
\address
{Fakult\"at f\"ur Mathematik,
Ruhr-Universit\"at Bochum,
D-44780 Bochum, Germany}
\email{gerhard.roehrle@rub.de}

\keywords{Complex reflection groups,
reflection arrangements, free arrangements, 
supersolvable arrangements}

\allowdisplaybreaks

\begin{abstract}
Let $\CA = (\CA,V)$ be a complex hyperplane arrangement and let
$L(\CA)$ denote its intersection lattice.
The arrangement $\CA$ is called supersolvable, provided 
its lattice $L(\CA)$ is supersolvable,
a notion due to 
Stanley \cite{stanley:super}. 
Jambu and Terao \cite[Thm.\ 4.2]{jambuterao:free}
showed that every 
supersolvable arrangement is inductively free, 
a notion due to Terao, \cite{terao:freeI}.
So this is a natural subclass of this particular class of free 
arrangements. 

Suppose that $W$ is a finite, unitary reflection group acting on the complex 
vector space $V$. Let $\CA = (\CA(W), V)$ be the associated 
hyperplane arrangement of $W$.
In \cite{hogeroehrle:indfree}, we determined all 
inductively free reflection arrangements.

The aim of this note is to 
classify all supersolvable reflection arrangements.
Moreover, we characterize 
the irreducible arrangements in this class 
by the presence of modular elements of rank $2$ in their intersection lattice.
\end{abstract}

\maketitle


\section{Introduction}

Let $\CA  = (\CA,V)$ be a complex hyperplane arrangement and let
$L(\CA)$ denote its intersection lattice.
We say that $\CA$ is \emph{supersolvable}, provided $L(\CA)$
is supersolvable, see Definition \ref{def:super}.
Jambu and Terao \cite[Thm.\ 4.2]{jambuterao:free}
have shown that every 
supersolvable arrangement is \emph{inductively free},
a notion due to Terao, \cite{terao:freeI};
see Definition \ref{def:indfree}.

Now suppose that $W$ is a finite, unitary 
reflection group acting on the complex 
vector space $V$.
Let $\CA = (\CA(W),V)$ be the associated 
hyperplane arrangement of $W$.
Terao \cite{terao:freeI} has shown that each reflection
arrangement $\CA$ is free and that 
the multiset of exponents 
$\exp \CA$ of $\CA$ is given by the 
coexponents 
of $W$; cf.\ \cite[\S 6]{orlikterao:arrangements}.
There is the stronger notion of 
inductive freeness referred to above, 
cf.\ Definition \ref{def:indfree}.
In \cite{hogeroehrle:indfree}, we classified all 
inductively free reflection arrangements,
cf.\ Theorem \ref{thm:ind-free}.
In this note we determine the subclass of all 
supersolvable reflection arrangements:
apart from the braid and rank $2$ arrangements, 
these consist only of the reflection arrangements
of the groups $G(r,p,\ell)$ for $r,\ell \ge 2$ and $p \ne r$,
cf.\   Theorem \ref{super}.
In addition, the irreducible 
arrangements of this nature
are characterized 
by the presence of a modular rank $2$ element 
in the intersection lattice,
see  Theorem \ref{super-rank2}.

Within the theory of hyperplane arrangements
supersolvability is rather strong condition, 
as it implies essentially every desirable 
property, such as factored, free, 
fiber-type, $K(\pi,1)$,
rational $K(\pi,1)$,  etc.\
see \cite[(2.8)]{falkRandell:fiber-type} for details.
We  briefly discuss three relevant properties.

Firstly, the Poincar\'e polynomial $\pi(\CA,t)$ 
of the lattice $L(\CA)$
of a supersolvable arrangement $\CA$ 
factors into linear terms over 
$\BBZ[t]$,
thanks to a theorem of Stanley \cite{stanley:super},
see also 
\cite[Thm.\ 2.63]{orlikterao:arrangements}.
Precisely, 
$\pi(\CA,t)$ 
factors into linear terms as follows:
\[
\pi(\CA,t) = \prod_{i=1}^\ell (1 + b_i t),
\]
where the coefficients $b_i$ are positive integers.

Secondly, as mentioned above,  
a supersolvable arrangement $\CA$ is always 
inductively free, 
\cite[Thm.\ 4.2]{jambuterao:free},
see also \cite[Thm.\ 4.85]{orlikterao:arrangements}.
So we can talk about the exponents $\exp \CA$ 
of $\CA$ in this case, cf.\ \S \ref{ssect:refl}.
In the factorization of $\pi(\CA,t)$ above,
the occurring coefficients $b_i$
are precisely the exponents of $\CA$,
\cite[Thm.\ 4.137]{orlikterao:arrangements}.

Thirdly, if $\CA$ is supersolvable, then 
$\CA$ is a $K(\pi,1)$-arrangement. 
For, Falk and Randell
\cite{falkRandell:fiber-type}
proved that 
\emph{fiber-type} arrangements are
always $K(\pi,1)$
(cf.\ \cite[Prop.\ 5.12]{orlikterao:arrangements})
and by work of Terao \cite{terao:modular}, 
the classes of
fiber-type arrangements and of
supersolvable arrangements coincide
(cf.\ \cite[Thm.\ 5.112]{orlikterao:arrangements}).
Therefore, every supersolvable arrangement is 
$K(\pi, 1)$.

Since a supersolvable 
arrangement is  inductively free, 
we only need to consider the latter
for our classification. 
We recall the 
main result from \cite{hogeroehrle:indfree}.

\begin{theorem}
\label{thm:ind-free}
For $W$ a finite complex reflection group,  
the reflection arrangement $\CA(W)$ of $W$ is 
inductively free if and only if 
$W$ does not admit an irreducible factor
isomorphic to a monomial group 
$G(r,r,\ell)$ for $r, \ell \ge 3$, 
$G_{24}, G_{27}, G_{29},G_{31},  G_{33}$, or $G_{34}$.
\end{theorem}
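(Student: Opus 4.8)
The plan is to reduce to irreducible $W$ via the Shephard--Todd classification and then to treat the infinite series $G(r,p,\ell)$ and the $34$ exceptional groups in turn, with Terao's Addition--Deletion Theorem as the main tool throughout. The reduction is standard: writing $\CA(W)=\prod_i\CA(W_i)$ over the irreducible factors $W_i$, the arrangement $\CA(W)$ is inductively free if and only if every $\CA(W_i)$ is --- products of inductively free arrangements are inductively free, and each factor arises as a localization of the product, over which inductive freeness is inherited (cf.\ \cite{orlikterao:arrangements,hogeroehrle:indfree}). So it suffices to decide the question for irreducible $W$.

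For the series, observe that $G(r,p,\ell)$ contains a diagonal reflection precisely when $p<r$, so that $\CA(G(r,p,\ell))=\CA(G(r,1,\ell))$, the full monomial arrangement, whenever $p\ne r$; the latter is fiber-type (project along one coordinate), hence supersolvable by \cite{terao:modular}, hence inductively free by \cite{jambuterao:free}. There remain the groups $G(r,r,\ell)$: if $r\le 2$ this is a Coxeter arrangement, of type $A_{\ell-1}$ or $D_\ell$, and so inductively free, since all Coxeter arrangements are; if $\ell\le 2$ it has rank at most $2$ and is inductively free for trivial reasons. Hence the one substantive point in the series is that $\CA(G(r,r,\ell))$ is \emph{not} inductively free once $r,\ell\ge 3$.

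This I would prove by an exponent obstruction, uniformly in $r$ and $\ell$. All $r\binom{\ell}{2}$ hyperplanes $x_i=\zeta x_j$ of $\CA(G(r,r,\ell))$ lie in a single $W$-orbit, so up to isomorphism there is just one restriction $\CA(G(r,r,\ell))^H$; a direct computation identifies it with the monomial arrangement $\CA(G(r,1,\ell-1))$ with all but one of its coordinate hyperplanes deleted, and an iterated linear fibration shows this arrangement to be fiber-type with $\exp\CA^H=\{1,\,r,\,2r,\dots,(\ell-2)r\}$. On the other hand $\exp\CA(G(r,r,\ell))=\{1,\,r+1,\,2r+1,\dots,(\ell-2)r+1,\,(\ell-1)r-\ell+1\}$, the coexponents of $G(r,r,\ell)$. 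Were $\CA(G(r,r,\ell))$ inductively free, the Addition--Deletion Theorem, applied to the last step of an inductive chain, would force $\exp\CA^H$ to equal $\exp\CA(G(r,r,\ell))$ with one entry deleted. A short count of the entries divisible by $r$ rules this out for all $r,\ell\ge 3$: the multiset $\exp\CA^H$ has exactly $\ell-2\ge 1$ of them, whereas deleting one entry from $\exp\CA(G(r,r,\ell))$ leaves at most one --- none survives when $r\nmid\ell-1$, and only $(\ell-1)r-\ell+1$ when $r\mid\ell-1$, a case in which moreover $\ell-2\ge 2$. Hence $\CA(G(r,r,\ell))$ is not inductively free.

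The $34$ exceptional groups are a finite verification. The reflection arrangements of $G_4,\dots,G_{22}$ have rank $2$ and hence are inductively free; those of $G_{23}$, $G_{28}$, $G_{30}$, $G_{35}$, $G_{36}$, $G_{37}$ are inductively free because all Coxeter arrangements are; and for $G_{25}$, $G_{26}$, $G_{32}$ one exhibits an explicit inductive chain, a short computation with \GAP\ and \CHEVIE. There remain $G_{24}$, $G_{27}$, $G_{29}$, $G_{31}$, $G_{33}$, $G_{34}$: for each one computes $\exp\CA(W)$, runs through the finitely many orbits of hyperplanes and the corresponding restrictions --- themselves arrangements of reflection type, so that the analysis recurses to bounded depth --- and verifies that at no stage is the exponent condition of the Addition--Deletion Theorem met, so that no inductive chain can terminate at $\CA(W)$. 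The main obstacle I expect is precisely this last block: once the uniform fiber-type argument for the series is exhausted, ruling out inductive freeness for the six exceptional groups comes down to an exact but lengthy bookkeeping over their intersection lattices and exponents, most safely carried out by machine.
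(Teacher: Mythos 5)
First, a point of order: the paper does not actually prove this statement --- Theorem \ref{thm:ind-free} is imported verbatim from \cite{hogeroehrle:indfree} --- so the comparison below is with the argument given there. Your architecture matches it: reduce to irreducible factors, dispose of $G(r,p,\ell)$ with $p\ne r$ via supersolvability of the full monomial arrangement, obstruct inductive freeness of $\CA(G(r,r,\ell))$ through the (single, up to $W$-conjugacy) restriction and its exponents, and finish the exceptional groups by a finite check. Two smaller caveats on the reduction step: inheritance of inductive freeness under localization is not an available general fact, so the ``only if'' direction of the product reduction needs the direct inductive argument of \cite{hogeroehrle:indfree}; and ``all Coxeter arrangements are inductively free'' is itself a nontrivial theorem for the exceptional types (Barakat--Cuntz for the Weyl groups), not a freebie.

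The genuine gap is in the series case. Your identification of the restriction is correct: $\CA(G(r,r,\ell))^H$ is $\CA(G(r,1,\ell-1))$ with all but one coordinate hyperplane removed. But its exponents are
\[
\{1,\ r+1,\ 2r+1,\ \dots,\ (\ell-3)r+1,\ (\ell-2)(r-1)+1\},
\]
not $\{1,r,2r,\dots,(\ell-2)r\}$; your formula is correct only for $\ell=3$. Concretely, for $r=3$, $\ell=4$ the restriction has $10$ hyperplanes and the coefficient of $t^2$ in its Poincar\'e polynomial is $29$, forcing exponents $\{1,4,5\}$ rather than your $\{1,3,6\}$ (which would give $27$). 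With the correct exponents all entries but one are congruent to $1$ modulo $r$, so your count of entries divisible by $r$ collapses and proves nothing. The conclusion is salvageable along the very lines you set up: if $\CA$ were inductively free via $H$, the Addition part of Addition--Deletion forces $\exp\CA = \exp\CA^H\cup\{|\CA|-|\CA^H|\}$, and here $|\CA|-|\CA^H| = r(\ell-1)-1\equiv -1 \pmod r$ lies in $\exp\CA(G(r,r,\ell)) = \{1,r+1,\dots,(\ell-2)r+1,(\ell-1)(r-1)\}$ only when $r=2$ or $\ell=2$; alternatively one checks directly that $\exp\CA^H\not\subseteq\exp\CA$ as multisets for all $r,\ell\ge 3$, which is essentially what \cite{hogeroehrle:indfree} does. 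So: right strategy, right restriction, wrong exponents, and a final count that must be redone.
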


Jambu and Terao have already observed
that $\CA(D_4)$ is not supersolvable, \cite[Ex.\ 5.5]{jambuterao:free}.
Thus,  supersolvable reflection arrangements 
form a proper subclass of the class of inductively free reflection arrangements.

It is straightforward to see that any rank $2$ 
arrangement is supersolvable, cf.\ Remark \ref{rem:2-arr}. 
In 1962, Fadell and Neuwirth \cite{fadellneuwirth}
showed that the
braid arrangement 
is fiber-type and 
in 1973 Brieskorn \cite{brieskorn:tresses} 
proved this for the reflection arrangement of the
hyperoctahedral group. 
In 1983, using Brieskorn's iterated fibration method,
Orlik and Solomon \cite[\S 4]{orliksolomon:coxeter} observed 
that indeed every $\CA(G(r,p,\ell))$ is fiber-type,
for $r, \ell \ge 2$  and $p \ne r$.
So $\CA$ is supersolvable in each of these cases.
Our classification asserts that 
this list is actually  complete.

\begin{theorem}
\label{super}
For $W$ a finite
complex reflection group,  
$\CA(W)$ is 
supersolvable if and only if 
any irreducible factor of $W$ is
of rank at most $2$, 
is isomorphic either to 
a Coxeter group 
of type $A_\ell$ or $B_\ell$ for $\ell \ge 3$, or to a  
monomial group $G(r,p,\ell)$
for $r, \ell \ge 3$  and $p \ne r$.
\end{theorem}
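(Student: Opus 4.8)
The plan is to reduce to irreducible $W$, use Theorem~\ref{thm:ind-free} to shrink the list of candidates, and then eliminate the survivors by a numerical obstruction to supersolvability read off from a modular coatom.

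\emph{Reduction and the ``if'' direction.} Write $W = W_1 \times \cdots \times W_k$ with the $W_i$ irreducible. Then $\CA(W) = \prod_i \CA(W_i)$ and $L(\CA(W)) = \prod_i L(\CA(W_i))$; in a direct product of geometric lattices the rank function is additive and joins and meets are formed coordinatewise, so a tuple is modular precisely when every component is. Hence a maximal modular chain of the product restricts coordinatewise to a maximal modular chain of each factor, and conversely maximal modular chains of the factors concatenate to one of the product. Therefore $\CA(W)$ is supersolvable if and only if every $\CA(W_i)$ is, and we may assume $W$ irreducible. For irreducible $W$ the arrangements appearing in Theorem~\ref{super} are supersolvable, by the facts recalled in the introduction: rank $\le 2$ arrangements are supersolvable (Remark~\ref{rem:2-arr}); the braid arrangement is fiber-type by Fadell--Neuwirth, hence supersolvable by Terao; and $\CA(G(r,p,\ell))$ is fiber-type by Brieskorn and Orlik--Solomon, hence supersolvable, for $r,\ell \ge 2$ and $p \ne r$ --- covering types $A_\ell$, $B_\ell$ and the monomial groups $G(r,p,\ell)$ with $p \ne r$.

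\emph{The ``only if'' direction.} Let $W$ be irreducible of rank $\ell$ with $\CA(W)$ supersolvable. Then $\CA(W)$ is inductively free (Jambu--Terao), so by Theorem~\ref{thm:ind-free} the group $W$ is not $G(r,r,\ell)$ with $r,\ell \ge 3$ and is not one of $G_{24}, G_{27}, G_{29}, G_{31}, G_{33}, G_{34}$. If $\ell \le 2$ we are done, so assume $\ell \ge 3$. Inspecting the remaining irreducible groups, the only ones not already on the list of Theorem~\ref{super} are
\[
D_\ell\ (\ell \ge 4),\quad H_3,\ H_4,\ F_4,\ E_6,\ E_7,\ E_8,\quad G_{25},\ G_{26},\ G_{32},
\]
and it remains to show that each of these has a non-supersolvable reflection arrangement. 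Here the key point is that a supersolvable arrangement of rank $\ell$ admits a modular coatom $X$ (the rank $\ell-1$ term of a maximal modular chain), and that for any modular coatom one has the factorization
\[
\pi(\CA,t) = \bigl(1 + (|\CA| - |\CA_X|)\,t\bigr)\,\pi(\CA_X,t),
\]
$\CA_X$ being the localization at $X$ (see \cite{stanley:super} and \cite[Ch.~2]{orlikterao:arrangements}). Since $\CA(W)$ is free, $\pi(\CA(W),t) = \prod_{i=1}^\ell (1 + m_i^* t)$ with the $m_i^*$ the coexponents of $W$ (see \cite[\S 6]{orlikterao:arrangements}), so the factor $1 + (|\CA(W)| - |\CA_X|)t$ must coincide with one of the $1 + m_i^* t$; thus $|\CA(W)| - |\CA_X|$ is a coexponent of $W$, and in particular $|\CA(W)| - |\CA_X| \le \max_i m_i^*$. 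On the other hand, by Steinberg's theorem $\CA_X = \CA(W_X)$ for the rank $\ell-1$ parabolic subgroup $W_X$ fixing $X$ pointwise, so $|\CA_X| \le N'$, the largest number of reflecting hyperplanes of a maximal parabolic subgroup of $W$. It then suffices to check in each of the ten cases that $|\CA(W)| - N' > \max_i m_i^*$, contradicting the existence of a modular coatom; for instance $24 - 9 = 15 > 11$ for $F_4$ (largest maximal parabolic of type $B_3$), $15 - 5 = 10 > 9$ for $H_3$, $36 - 20 = 16 > 11$ for $E_6$ (largest maximal parabolic of type $D_5$), and $|\CA(D_\ell)| - N' = 2\ell - 2 > 2\ell - 3$ for all $\ell \ge 4$ (for $D_4$ recovering \cite[Ex.~5.5]{jambuterao:free}). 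In several cases the bookkeeping shortens by instead passing to a maximal parabolic already known not to be supersolvable --- $D_4$ inside each of $D_\ell$ $(\ell \ge 5)$, $E_6$, $E_7$, $E_8$; $H_3$ inside $H_4$; $G_{25}$ inside $G_{32}$ --- using that localizations of supersolvable arrangements are again supersolvable.

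\emph{Main obstacle.} The crux is, first, conceptual: everything hinges on the modular-coatom factorization of $\pi(\CA,t)$, since freeness alone only gives that $\pi$ splits into linear factors, and it is this sharper identity --- which genuinely uses supersolvability --- that makes the coexponent bound effective. Second, it is computational: one must verify the displayed inequality in all ten cases, which requires the exact coexponents and the classification of maximal parabolic subgroups --- routine for the Coxeter groups, but needing more care for the exotic groups $G_{25}, G_{26}, G_{32}$, where one may alternatively just compute $L(\CA(W))$ directly (for example with \CHEVIE) and verify that it has no modular coatom.
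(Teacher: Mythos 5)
Your proof is correct, but the elimination of the surviving cases runs along a genuinely different line from the paper's. Both arguments agree on the reduction to irreducible $W$ (your product argument is the paper's Proposition~\ref{prop:product-super}), on the ``if'' direction, and on using Jambu--Terao together with Theorem~\ref{thm:ind-free} to cut the ``only if'' direction down to $D_\ell$, $F_4$, $E_6$, $E_7$, $E_8$, $H_3$, $H_4$, $G_{25}$, $G_{26}$, $G_{32}$. From there the paper shows that these lattices admit \emph{no modular element of rank $2$}: for the minimal cases $D_4$, $F_4$, $H_3$, $G_{25}$, $G_{26}$ it exhibits, for each $W$-orbit representative $X$ of rank $2$, an explicit $Y$ with $X+Y\notin L(\CA)$ (Lemma~\ref{lem:super}), and then propagates this to the larger groups via the orbit argument of Lemma~\ref{lem:parabolicsuper}. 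You instead rule out a \emph{modular coatom} by the numerical obstruction that $|\CA|-|\CA_X|$ must be an exponent, obtained from the modular-coatom factorization of $\pi(\CA,t)$ combined with Terao's factorization for the free arrangement $\CA(W)$; your inequality $|\CA(W)|-N'>\max_i m_i^*$ does in fact hold in all ten cases (e.g.\ $12-4=8>7$ for $G_{25}$ and $21-5=16>13$ for $G_{26}$), so the criterion closes every case without computing defining polynomials or exhibiting non-lattice sums. What each approach buys: yours is shorter and cleaner for Theorem~\ref{super} alone; the paper's finer rank-$2$ analysis is exactly what it needs for Theorem~\ref{super-rank2}, which a coatom criterion cannot deliver. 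Two points to tidy up: your shortcut via $D_4\subset E_7, E_8$, etc.\ invokes parabolics that are not coatom stabilizers, so it rests on the true and standard but here unproved fact that localizations of supersolvable arrangements are supersolvable (the paper's Lemma~\ref{lem:parabolicsuper} is its substitute for this); and $N'$ must be the maximum of $|\CA(W_X)|$ over \emph{all} conjugacy classes of rank $\ell-1$ parabolic point stabilizers, which for the non-Coxeter groups requires the tables in \cite[App.~C]{orlikterao:arrangements} rather than subdiagram bookkeeping.
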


In 1962,  Fadell and Neuwirth \cite{fadellneuwirth}
proved that the braid arrangement $\CA(A_\ell)$
is $K(\pi,1)$. In 1973, 
Brieskorn \cite{brieskorn:tresses} extended this 
result to a large class of 
Coxeter groups and 
conjectured that this is the case for 
every Coxeter group.
This was proved subsequently by Deligne \cite{deligne},
who showed that  every 
\emph{simplicial} arrangement is $K(\pi,1)$.
All reflection arrangements $\CA(W)$ 
have been known to be $K(\pi,1)$ since the late 1980s, 
with the exception of the ones for
the six exceptional groups listed in Theorem \ref{thm:ind-free}, see
\cite[\S 6.6]{orlikterao:arrangements}.
Since these arrangements are not inductively free,
they are not supersolvable (or fiber-type) and so 
one cannot easily deduce 
that they are also $K(\pi,1)$.
These outstanding cases were settled only 
recently 
by Bessis \cite{bessis:kpione}.

The definition of 
supersolvability of $\CA$ entails the existence of 
modular elements in $L(\CA)$ of any possible rank;
see \S \ref{ssect:supersolve} for the notion of 
modular elements.
Strikingly, our second main result shows that 
irreducible,
supersolvable reflection arrangements 
are characterized merely by the presence
of a modular element of  rank $2$.

\begin{theorem}
\label{super-rank2}
For $W$ a finite, irreducible
complex reflection group of rank at least $2$,  
$\CA(W)$ is 
supersolvable if and only if 
there exists a modular element of rank $2$
in its lattice $L(\CA(W))$.
\end{theorem}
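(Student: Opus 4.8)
The forward implication is immediate from the definition of supersolvability: if $\CA(W)$ is supersolvable then $L(\CA(W))$ carries a maximal chain of modular elements $\hat 0 = X_0 \lessdot X_1 \lessdot \cdots \lessdot X_\ell = \hat 1$, and since $\CA(W)$ has rank $\ell \ge 2$ the term $X_2$ is a modular element of rank $2$. The content of the theorem lies in the converse, and the plan is to reduce it, via the classification Theorem \ref{super}, to a finite list of verifications. So assume that $L(\CA(W))$ possesses a modular element of rank $2$. By Theorem \ref{super} it suffices to rule out the possibility that $W$ is one of the irreducible groups of rank at least $3$ whose reflection arrangement is not supersolvable, that is --- again by that theorem --- one of $D_\ell$ with $\ell \ge 4$, one of the monomial groups $G(r,r,\ell)$ with $r \ge 3$ and $\ell \ge 3$, or one of the exceptional groups $G_{23},\dots,G_{37}$; so for each of these $W$ we must show that $L(\CA(W))$ contains no modular element of rank $2$. (For $W$ of rank $2$ the statement is vacuous: $\CA(W)$ is supersolvable and $\hat 1$ is a modular element of rank $2$.)

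For the groups of rank exactly $3$ on this list --- namely $G(r,r,3)$ with $r \ge 3$ and $G_{23},G_{24},G_{25},G_{26},G_{27}$ --- no computation is needed: it is a classical fact that a geometric lattice of rank $3$ is supersolvable if and only if it admits a modular element of rank $2$ (such an element is then a modular coatom, and any atom below it completes a modular maximal chain). Since these arrangements are not supersolvable by Theorem \ref{super}, their intersection lattices carry no modular element of rank $2$.

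There remain the groups of rank at least $4$. For each such $W$ the plan is to show directly that no rank-$2$ flat $X \in L(\CA(W))$ is modular, by producing a flat $Y \in L(\CA(W))$ for which the modular identity $r(X) + r(Y) = r(X \vee Y) + r(X \wedge Y)$ fails. One should be prepared to take $Y$ of rank larger than $2$: for instance, in $\CA(D_4)$ the rank-$2$ flat $X = \{x_1 = x_2 = x_3\}$ satisfies the modular identity with every rank-$2$ flat, yet fails it with the rank-$3$ flat $Y = \{x_1 = x_4 = 0,\ x_2 = -x_3\}$, because the span $X + Y$ is the hyperplane $\{2x_1 = x_2 + x_3\}$, which does not belong to $\CA(D_4)$. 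For the two infinite families $D_\ell = G(2,2,\ell)$ with $\ell \ge 4$ and $G(r,r,\ell)$ with $r \ge 3,\ \ell \ge 4$ this is carried out uniformly: the rank-$2$ flats of $\CA(G(r,r,\ell))$ come in only a few combinatorial types according to the hyperplanes through them --- essentially $\{x_i = \zeta x_j = \zeta' x_k\}$, $\{x_i = \zeta x_j\}\cap\{x_k = \zeta' x_l\}$, and $\{x_i = x_j = 0\}$ --- and for each type the presence of at least four coordinates lets one write down an explicit flat $Y$, supported on a bounded number of coordinates, for which $X + Y$ is not a reflecting hyperplane and the modular identity breaks. For the remaining exceptional groups $F_4 = G_{28}$, $H_4 = G_{30}$, $G_{29},G_{31},G_{32},G_{33},G_{34}$, $E_6 = G_{35}$, $E_7 = G_{36}$ and $E_8 = G_{37}$ there are only finitely many $W$-orbits of rank-$2$ flats, and a direct inspection --- which we perform with the aid of \GAP\ and \CHEVIE --- shows that none of them is modular.

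The step I expect to require the most care is the uniform treatment of the family $G(r,r,\ell)$: one must classify the rank-$2$ flats and their localizations precisely, and for each type exhibit a single witness $Y$ valid for all admissible $r$ and $\ell$ while correctly tracking the roots of unity occurring in the defining equations and making sure no type is overlooked; the subtlety illustrated by the $\CA(D_4)$ example above --- that checking the modular identity against rank-$2$ flats alone does not suffice --- must be kept in mind throughout. By comparison the exceptional cases, though numerous, amount to a bounded finite check once the orbits of rank-$2$ flats are enumerated.
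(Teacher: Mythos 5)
Your proposal is correct in outline and shares the paper's skeleton: the forward direction from the definition, and for the converse a reduction via Theorem \ref{super} to showing that the non-supersolvable irreducible arrangements of rank at least $3$ admit no modular element of rank $2$. Your treatment of the rank-$3$ groups via the equivalence ``rank-$3$ supersolvable $\Leftrightarrow$ there is a modular coatom'' is exactly the paper's Lemma \ref{lem:super-rank2} (applied in Lemma \ref{lem:super2}), and your $D_4$ witness of rank $3$ matches the spirit of the computations in Lemma \ref{lem:super}. Where you genuinely diverge is in the rank $\ge 4$ cases. The paper's labor-saving device, which your plan lacks, is Lemma \ref{lem:parabolicsuper}: if a parabolic subgroup $W_X$ has no modular rank-$2$ elements in $L(\CA(W_X)) = L(\CA)_X$ and every $W$-orbit of rank-$2$ flats meets $L(\CA)_X$, then $L(\CA)$ has no modular rank-$2$ elements --- a witness $Z$ found inside $L(\CA)_X$ transports to a witness for any flat in the orbit, since a sum of two flats containing $X$ would again have to contain $X$. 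This reduces $D_\ell$ ($\ell \ge 5$), $E_6$, $E_7$, $E_8$, $H_4$, $G_{32}$, $G_{33}$, $G_{34}$ and $G(r,r,\ell)$ ($\ell \ge 5$) to the hand-checkable base cases $D_4$, $H_3$, $G_{25}$, $G(r,r,4)$, $G_{29}$, $G_{31}$, and even within $G(r,r,4)$ and $G_{29}$ it disposes of all but one orbit of rank-$2$ flats. Your alternative --- a uniform witness for each of the three combinatorial types of rank-$2$ flats of $G(r,r,\ell)$, plus an exhaustive search over $L(\CA)$ for the exceptional groups --- would also work: the checks are finite, and since your witnesses produce sums of codimension $1$, verifying that the sum is not a reflecting hyperplane really does show it is not in $L(\CA)$. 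But it is substantially heavier (especially for $E_8$ and $G_{34}$), and both the uniform family computation and the exceptional inspections are left as promissory notes in your write-up. The parabolic induction buys economy and hand-verifiability; your route buys uniformity within the monomial family at the cost of a large computation elsewhere.
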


The condition of irreducibility in  
Theorem \ref{super-rank2} 
is necessary, see Remark \ref{rem:reducible}.

The paper is organized as follows.
In \S 2 we recall the required notation and  
facts about supersolvability of arrangements
and reflection arrangements from
\cite[\S 4, \S6]{orlikterao:arrangements}.
Further, in Proposition \ref{prop:product-super},
we show that supersolvable arrangements
behave well with respect to 
the  product construction for arrangements.
Using  this fact,
it is easy to construct non-supersolvable arrangements
admitting modular elements of every 
possible rank, Remark \ref{rem:reducible}.

Theorems \ref{super} and \ref{super-rank2} are proved in \S \ref{sect:proof}
in a sequence of lemmas.
Here we provide a particularly useful inductive tool for 
showing that the  
reflection arrangement 
of a given  
reflection group $W$  is not supersolvable
provided $W$ admits a suitable parabolic subgroup 
whose reflection arrangement is not supersolvable, 
cf.\ Lemma \ref{lem:parabolicsuper}.

For general information about arrangements and reflection groups we refer
the reader to \cite{bourbaki:groupes} and \cite{orlikterao:arrangements}.

\section{Recollections and Preliminaries}

\subsection{Hyperplane Arrangements}
\label{ssect:hyper}

Let $V = \BBC^\ell$ 
be an $\ell$-dimensional complex vector space.
A \emph{hyperplane arrangement} is a pair
$(\CA, V)$, where $\CA$ is a finite collection of hyperplanes in $V$.
Usually, we simply write $\CA$ in place of $(\CA, V)$.
We only consider central arrangements.
The empty arrangement in $V$ is denoted by $\Phi_\ell$.

The \emph{lattice} $L(\CA)$ of $\CA$ is the set of subspaces of $V$ of
the form $H_1\cap \dotsm \cap H_n$ where $\{ H_1, \ldots, H_n \}$ is a subset
of $\CA$. Note that $V$ belongs to $L(\CA)$
as the intersection of the empty 
collection of hyperplanes.
The lattice $L(\CA)$ is a partially ordered set by reverse inclusion:
$X \le Y$ provided $Y \subseteq X$ for $X,Y \in L(\CA)$.
We have a \emph{rank} function on $L(\CA)$: $r(X) := \codim_V(X)$.
The \emph{rank} $r(\CA)$ of $\CA$ is the rank of a maximal element in $L(\CA)$ with respect
to the partial order.
With this definition $L(\CA)$ is a geometric lattice, 
\cite[p.\ 24]{orlikterao:arrangements}.
The $\ell$-arrangement $\CA$ is called \emph{essential} provided $r(\CA) = \ell$.

The \emph{product}
$\CA = (\CA_1 \times \CA_2, V_1 \oplus V_2)$ 
of two arrangements $(\CA_1, V_1), (\CA_2, V_2)$
is defined by
\begin{equation*}
\label{eq:product}
\CA := \CA_1 \times \CA_2 = \{H_1 \oplus V_2 \mid H_1 \in \CA_1\} \cup 
\{V_1 \oplus H_2 \mid H_2 \in \CA_2\},
\end{equation*}
see \cite[Def.\ 2.13]{orlikterao:arrangements}.
Let $\CA = \CA_1 \times \CA_2$ be the product
of the two arrangements $\CA_1$ and $\CA_2$. 
We define  a partial order on 
$L(\CA_1) \times L(\CA_2)$ by
$(X_1,X_2) \le (Y_1,Y_2)$ provided $X_1 \le Y_1$ and $X_2 \le Y_2$.
Then, by  \cite[Prop.\ 2.14]{orlikterao:arrangements},
there is a lattice isomorphism
\begin{equation}
\label{eq:product-lattice}
 L(\CA_1) \times L(\CA_2) \cong L(\CA) \quad \text{by} \quad
(X_1, X_2) \mapsto X_1 \oplus X_2.
\end{equation}


Note that 
$\CA \times \Phi_0 = \CA$
for any arrangement $\CA$. 
If $\CA$ is of the form $\CA = \CA_1 \times \CA_2$, where 
$\CA_i \ne \Phi_0$ for $i=1,2$, then $\CA$
is called \emph{reducible}, else $\CA$
is 
\emph{irreducible}, 
\cite[Def.\ 2.15]{orlikterao:arrangements}.
For instance, the braid arrangement $\CA(A_\ell)$ is 
the product of the empty $1$-arrangement and an irreducible $(\ell-1)$-arrangement, 
\cite[Ex.\ 2.16]{orlikterao:arrangements}.

\subsection{Reflection Groups and Reflection Arrangements}
\label{ssect:refl}
The irreducible finite complex reflection groups were 
classified by Shephard and Todd, \cite{sheppardtodd}.
Let $W  \subseteq \GL(V)$ be a finite complex reflection group.
For $w \in W$, we write 
$\Fix(w) :=\{ v\in V \mid w v = v\}$ for 
the fixed point subspace of $w$.
For $U \subseteq V$ a subspace, we 
define the \emph{parabolic subgroup}
$W_U$ of $W$ by 
$W_U := \{w \in W \mid U \subseteq \Fix(w)\}$.

The \emph{reflection arrangement} $\CA = \CA(W)$ of $W$ in $V$ is 
the hyperplane arrangement 
consisting of the reflecting hyperplanes of the elements in $W$
acting as reflections on $V$.
By Steinberg's Theorem \cite[Thm.\ 1.5]{steinberg:invariants},
for $U \subseteq V$ a subspace, 
the parabolic subgroup
$W_U$ is itself a complex reflection group,
generated by the unitary reflections in $W$ that are contained
in $W_U$. 
This allows us to identify the 
reflection arrangement $\CA(W_U)$
of $W_U$ as a subarrangement of $\CA$.
This way, the lattice 
$L(\CA(W_U))$ of $\CA(W_U)$ is identified with a sublattice of $L(\CA)$.
We make these identifications throughout.

Note that for $X \in L(\CA)$, we have 
$\CA(W_X) = \CA_X := \{H \in \CA \mid X \subseteq H\}$,
cf.\ 
\cite[Thm.\ 6.27, Cor.\ 6.28]{orlikterao:arrangements}.
It follows that 
$L(\CA(W_X)) = L(\CA_X) = L(\CA)_X  
:= \{Z \in L(\CA) \mid X \subseteq Z\}$,
by \cite[Lem.\ 2.11]{orlikterao:arrangements}.

Following \cite[\S 6.4, App.\ C]{orlikterao:arrangements},
we use the convention 
to label the $W$-orbit 
of $X \in L(\CA)$ by the type
of the complex reflection group $W_X$.
In that way we get a correspondence between
the $W$-orbits in $L(\CA)$ and the conjugacy classes of 
parabolic subgroups of $W$.

\subsection{Free and Inductively Free Arrangements}
\label{ssect:free}

Let $S = S(V^*)$ be the symmetric algebra of the dual space $V^*$ of $V$.
If $x_1, \ldots , x_\ell$ is a basis of $V^*$, then we identify $S$ with 
the polynomial ring $\BBC[x_1, \ldots , x_\ell]$.
Letting $S_p$ denote the $\BBC$-subspace of $S$
consisting of the homogeneous polynomials of degree $p$ (along with $0$),
we see that
$S$ is naturally $\BBZ$-graded: $S = \oplus_{p \in \BBZ}S_p$, where
$S_p = 0$ for $p < 0$.

Let $\Der(S)$ be the $S$-module of $\BBC$-derivations of $S$.
For $i = 1, \ldots, \ell$, 
let $D_i := \partial/\partial x_i$.
Then $D_1, \ldots, D_\ell$ is a $\BBC$-basis of $\Der(S)$.
We say that $\theta \in \Der(S)$ is 
\emph{homogeneous of polynomial degree p}
provided 
$\theta = \sum_{i=1}^\ell f_i D_i$, 
where $f_i \in S_p$ for each $1 \le i \le \ell$.
In this case we write $\pdeg \theta = p$.
Let $\Der(S)_p$ be the $\BBC$-subspace of $\Der(S)$ consisting 
of all homogeneous derivations of polynomial degree $p$.
Then $\Der(S)$ is a graded $S$-module:
$\Der(S) = \oplus_{p\in \BBZ} \Der(S)_p$.

Following \cite[Def.~4.4]{orlikterao:arrangements}, 
for $f \in S$, we define the $S$-submodule $D(f)$ of $\Der(S)$ by
$D(f) := \{\theta \in \Der(S) \mid \theta(f) \in f S\} $.
Let $\CA$ be an arrangement in $V$. 
Then for $H \in \CA$ we fix $\alpha_H \in V^*$ with
$H = \ker \alpha_H$.
The \emph{defining polynomial} $Q(\CA)$ of $\CA$ is given by 
$Q(\CA) := \prod_{H \in \CA} \alpha_H \in S$.

The \emph{module of $\CA$-derivations} of $\CA$ is 
defined by 
$D(\CA) := D(Q(\CA))$.
We say that $\CA$ is \emph{free} if the module of $\CA$-derivations
$D(\CA)$ is a free $S$-module.
The notion of freeness was introduced by Saito in his 
seminal work \cite{saito}.

With the $\BBZ$-grading of $\Der(S)$, the module of $\CA$-derivations
becomes a graded $S$-module $D(\CA) = \oplus_{p\in \BBZ} D(\CA)_p$,
where $D(\CA)_p = D(\CA) \cap \Der(S)_p$, 
\cite[Prop.\ 4.10]{orlikterao:arrangements}.
If $\CA$ is a free arrangement, then the $S$-module 
$D(\CA)$ admits a basis of $\ell$ homogeneous derivations, 
say $\theta_1, \ldots, \theta_\ell$, \cite[Prop.\ 4.18]{orlikterao:arrangements}.
While the $\theta_i$'s are not unique, their polynomial 
degrees $\pdeg \theta_i$ 
are unique (up to ordering). This multiset is the set of 
\emph{exponents} of the free arrangement $\CA$
and is denoted by $\exp \CA$.

There is a stronger notion of freeness, motivated
by the so called 
\emph{Addition-Deletion Theorem},
see \cite[Thm.\ 4.51]{orlikterao:arrangements}.

\begin{defn}
\label{def:indfree}
The class $\CIF$ of \emph{inductively free} arrangements 
is the smallest class of arrangements subject to
\begin{itemize}
\item[(i)] $\Phi_\ell \in \CIF$ for each $\ell \ge 0$;
\item[(ii)] if there exists an $H \in \CA$ such that both
the subarrangement 
$\CA \setminus\{H\}$ of $\CA$ and 
the restriction of $\CA$ to $H$, 
$\CA^H := \{H' \cap H \mid H' \in \CA \setminus\{H\}\}$, 
belong to $\CIF$, and $\exp \CA^H \subseteq \exp (\CA \setminus\{H\})$, 
then $\CA$ also belongs to $\CIF$.
\end{itemize}
\end{defn}

Terao \cite{terao:freeI} proved that 
each reflection arrangement is free.
In \cite{hogeroehrle:indfree}, we classified all 
inductively free reflection arrangements;
cf.\ Theorem \ref{thm:ind-free}.

\subsection{Supersolvable Arrangements}
\label{ssect:supersolve}

Let $\CA$ be an arrangement.
Following \cite[\S 2]{orlikterao:arrangements}, we say
that $X \in L(\CA)$ is \emph{modular}
provided $X + Y \in L(\CA)$ for every $Y \in L(\CA)$.
(This is not the actual definition of a modular
element but it is equivalent to the definition in our case, 
 \cite[Cor.\ 2.26]{orlikterao:arrangements}.)
Let $\CA$ be a central (and essential) $\ell$-arrangement.
The following notion is due to Stanley \cite{stanley:super}. 

\begin{defn}
\label{def:super}
We say that $\CA$ is 
\emph{supersolvable} 
provided there is a maximal chain
\[
V = X_0 < X_1 < \ldots < X_{\ell-1} < X_\ell = \{0\}
\]
 of modular elements $X_i$ in $L(\CA)$,
cf.\ \cite[Def.\ 2.32]{orlikterao:arrangements}.
\end{defn}

This terminology owes to the fact that 
the lattice of subgroups of a finite supersolvable
group satisfies the condition in Definition \ref{def:super}.

\begin{remark}
\label{rem:2-arr}
By \cite[Ex.\ 2.28]{orlikterao:arrangements}, 
$V$, $\{0\}$ and the members in $\CA$ 
are always modular in $L(\CA)$.
It follows  
that all $0$- $1$-, and $2$-arrangements are supersolvable.
\end{remark}

As mentioned in the Introduction, 
supersolvable arrangements
are always inductively free, see also 
\cite[Thm.\ 4.58]{orlikterao:arrangements}.
In general, a free $3$-arrangement need not be 
supersolvable as such an arrangement need not be 
inductively free,
see \cite[Ex.\ 4.59]{orlikterao:arrangements}.

The following observation is immediate from 
Definition \ref{def:super} and
Remark \ref{rem:2-arr}.

\begin{lemma}
\label{lem:super-rank2}
A $3$-arrangement $\CA$ is supersolvable if and only if 
there exists a modular rank $2$ element in $L(\CA)$.
\end{lemma}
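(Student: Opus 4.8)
The plan is to prove Lemma \ref{lem:super-rank2} directly from Definition \ref{def:super} together with Remark \ref{rem:2-arr}, treating the two implications separately. The forward implication is essentially trivial: if a $3$-arrangement $\CA$ is supersolvable, then by definition there is a maximal chain $V = X_0 < X_1 < X_2 < X_3 = \{0\}$ of modular elements in $L(\CA)$, and since $r(\CA) = 3$ (we assume $\CA$ essential, as in the setup for supersolvability), the intermediate element $X_2$ has rank $2$ and is modular. This gives a modular rank $2$ element with no work beyond unwinding the definitions.

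For the reverse implication, suppose $X \in L(\CA)$ is modular of rank $2$. I would exhibit the required maximal chain explicitly. Set $X_0 = V$, $X_2 = X$, and $X_3 = \{0\}$. By Remark \ref{rem:2-arr}, both $V$ and $\{0\}$ are modular in $L(\CA)$, so the only issue is to insert a modular rank $1$ element $X_1$ with $V < X_1 < X$, i.e.\ a hyperplane $H \in \CA$ containing $X$. Such a hyperplane exists because $X$ has rank $2 > 0$, hence $X \ne V$, and every element of $L(\CA)$ is an intersection of hyperplanes of $\CA$, so $X$ is contained in at least one $H \in \CA$; concretely one picks any $H \in \CA_X$, which is nonempty since $r(X) = 2$. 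Again by Remark \ref{rem:2-arr}, this $H$ is modular in $L(\CA)$. Thus $V = X_0 < X_1 = H < X_2 = X < X_3 = \{0\}$ is a maximal chain of modular elements, and $\CA$ is supersolvable by Definition \ref{def:super}.

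There is essentially no obstacle here — the statement is, as the text says, ``immediate'' — but the one point deserving a word of care is that the definition of supersolvability presupposes $\CA$ is central and essential, so the chain must descend all the way to $\{0\}$ and must have length exactly $\ell = 3$; this is why it suffices to produce a single modular element of the middle rank $2$, with the rank $1$ and rank $3$ links of the chain supplied automatically by Remark \ref{rem:2-arr}. For non-essential $3$-arrangements the statement would need to be phrased in terms of $r(\CA)$ rather than the ambient dimension, but under the standing convention of \S\ref{ssect:supersolve} no such adjustment is needed.
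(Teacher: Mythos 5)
Your proof is correct and follows exactly the route the paper intends: the paper gives no written proof, stating only that the lemma is ``immediate from Definition \ref{def:super} and Remark \ref{rem:2-arr}'', and your argument is precisely the unwinding of that remark (the rank $0$, $1$, and $3$ links of the chain are modular for free, so a single modular rank $2$ element completes a maximal modular chain). Nothing further is needed.
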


Thanks to \cite[Prop.\ 4.28]{orlikterao:arrangements}, 
free arrangements behave well with respect to 
the  product construction for arrangements.
This is also the case for  
supersolvable arrangements. 

\begin{proposition}
\label{prop:product-super}
Let $\CA_1, \CA_2$ be two arrangements.
Then  $\CA = \CA_1 \times \CA_2$ is supersolvable
if and only if both 
$\CA_1$ and $\CA_2$ are supersolvable and in that case
the multiset of exponents of $\CA$ is given by 
$\exp \CA = \{\exp \CA_1, \exp \CA_2\}$.
\end{proposition}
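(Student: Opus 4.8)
The statement has two parts: the equivalence of supersolvability of $\CA = \CA_1 \times \CA_2$ with that of both factors, and the computation of exponents in the supersolvable case. For the exponent claim, once supersolvability of $\CA$ is established I would simply invoke \cite[Prop.\ 4.28]{orlikterao:arrangements}: supersolvable arrangements are free (via inductive freeness), and that result says $\exp \CA = \{\exp \CA_1, \exp \CA_2\}$ for a product of free arrangements. So the real content is the equivalence, and the natural tool is the lattice isomorphism $L(\CA_1) \times L(\CA_2) \cong L(\CA)$ from \eqref{eq:product-lattice}, sending $(X_1, X_2) \mapsto X_1 \oplus X_2$.

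First I would record the key compatibility: under this isomorphism, the partial order, the join/sum operation, and the rank function all behave componentwise — in particular $r(X_1 \oplus X_2) = r(X_1) + r(X_2)$ and $(X_1 \oplus X_2) + (Y_1 \oplus Y_2) = (X_1 + Y_1) \oplus (X_2 + Y_2)$. From this the crucial observation is a \emph{characterization of modular elements of a product}: $X_1 \oplus X_2$ is modular in $L(\CA)$ if and only if $X_1$ is modular in $L(\CA_1)$ and $X_2$ is modular in $L(\CA_2)$. Indeed, if both $X_i$ are modular then for any $Y_1 \oplus Y_2 \in L(\CA)$ we get $(X_1 \oplus X_2) + (Y_1 \oplus Y_2) = (X_1+Y_1) \oplus (X_2+Y_2) \in L(\CA)$; conversely, testing modularity of $X_1 \oplus X_2$ against elements of the form $Y_1 \oplus V_2$ (resp.\ $V_1 \oplus Y_2$) forces $X_1$ (resp.\ $X_2$) to be modular.

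Next I would transfer maximal chains. Suppose $\CA_1$ is an $\ell_1$-arrangement and $\CA_2$ is an $\ell_2$-arrangement, and that each $\CA_i$ admits a maximal chain $V_i = X_0^{(i)} < \dots < X_{\ell_i}^{(i)} = \{0\}$ of modular elements. Splicing these together — first running up the first chain tensored with $V_2$, then the second chain below $\{0\}_1$ — produces a chain $V_1 \oplus V_2 = X_0^{(1)} \oplus V_2 < \dots < \{0\}_1 \oplus V_2 = \{0\}_1 \oplus X_0^{(2)} < \dots < \{0\}_1 \oplus \{0\}_2 = \{0\}$ of length $\ell_1 + \ell_2$. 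Each step increases rank by exactly one, so it is a maximal chain; and each term is a sum of two modular elements, hence modular by the characterization above. This proves $\CA$ is supersolvable. Conversely, given a maximal chain of modular elements in $L(\CA)$, each term has the form $X_i^{(1)} \oplus X_i^{(2)}$ with both components modular; projecting onto the two coordinates and deleting repetitions yields chains in $L(\CA_1)$ and $L(\CA_2)$. Since rank is additive and each step in the original chain raises total rank by one, the first-coordinate ranks sweep out $0,1,\dots,\ell_1$ (without skipping) and likewise for the second, so after removing repeats we obtain genuine maximal chains of modular elements in each factor — hence both $\CA_i$ are supersolvable.

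The main obstacle — really a bookkeeping point rather than a deep one — is the converse direction: one must check that a maximal chain in the product really does restrict to maximal chains in the factors, i.e.\ that no rank is skipped in either coordinate. This follows from the additivity $r(X_i^{(1)} \oplus X_i^{(2)}) = r(X_i^{(1)}) + r(X_i^{(2)})$ together with the fact that consecutive terms of the product chain differ in rank by exactly $1$, so at each step exactly one of the two coordinate ranks goes up by one and the other stays fixed; the coordinate-$1$ ranks therefore form the sequence $0,1,\dots,\ell_1$ with repetitions, and stripping repetitions gives a chain of length $\ell_1$, which is maximal. Finally I would note that the essentiality hypothesis built into Definition \ref{def:super} is harmless here, since $\CA$ is essential exactly when both $\CA_i$ are, and the degenerate case $\CA_2 = \Phi_0$ (where $\CA = \CA_1$) is trivial.
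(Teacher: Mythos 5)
Your proposal is correct and follows essentially the same route as the paper's proof: both directions rest on the lattice isomorphism \eqref{eq:product-lattice}, the componentwise behaviour of sums (so that modularity of $X_1\oplus X_2$ is equivalent to modularity of both components), the spliced chain $X_1^j\oplus V_2$ followed by $\{0\}\oplus X_2^k$ for the forward direction, extraction of subsequences for the converse, and \cite[Prop.\ 4.28]{orlikterao:arrangements} for the exponents. The only cosmetic difference is that you isolate the characterization of modular elements of a product as an explicit intermediate step, which the paper carries out inline in each direction.
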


\begin{proof}
Let $\CA_i = (\CA_i, V_i)$, $\ell_i = \dim V_i$
for $i = 1,2$, 
and let $(\CA,V) = (\CA_1 \times \CA_2, V_1 \oplus V_2)$,
and $\ell = \ell_1 + \ell_2 = \dim V$. 

First suppose that both $\CA_1$ and $\CA_2$
are supersolvable with
\[
V_i = X_i^0 < X_i^1 < \ldots < X_i^{\ell_i} = \{0\}
\]
being a maximal chain of modular elements in $L(\CA_i)$ for $i = 1,2$. 
For $0 \le j \le \ell_1$ set
$Z^j := X_1^j \oplus V_2$
and for $\ell_1 + 1 \le j \le \ell$ set
$Z^j := \{0\} \oplus X_2^{j-\ell_1} = X_2^{j-\ell_1}$.
By \eqref{eq:product-lattice}, $Z^j \in L(\CA)$ for each $0 \le j \le \ell$. 
We claim that 
\[
V = Z^0 < Z^1 < \ldots < Z^{\ell} = \{0\}
\]
is a maximal chain of modular elements in $L(\CA)$.
Clearly, this is a proper chain of elements in $L(\CA)$
of length $\ell$ by construction, cf.\ \eqref{eq:product-lattice}. 
Let $Y = Y_1 \oplus Y_2 \in L(\CA)$.
Then, for $0 \le j \le \ell_1$, 
\[
Z^j  + Y = (X_1^j \oplus V_2) + (Y_1 \oplus Y_2) = (X_1^j + Y_1)\oplus V_2
\] 
belongs to $L(\CA)$, by \eqref{eq:product-lattice},
since $X_1^j$ is modular in $L(\CA_1)$, so that $X_1^j + Y_1 \in L(\CA_1)$.
Likewise,  
\[
Z^j  + Y = (\{0\} \oplus X_2^{j-\ell_1}) + (Y_1 \oplus Y_2) = 
Y_1 \oplus (X_2^{j-\ell_1} + Y_2) 
\] 
lies in $L(\CA)$, 
for $\ell_1 + 1 \le j \le \ell$, by \eqref{eq:product-lattice}.
As $Y \in L(\CA)$ is arbitrary, 
$\CA$ is supersolvable.

Now suppose that $\CA$ is supersolvable. Let 
\[
V = X^0 < X^1 < \ldots < X^{\ell-1} < X^\ell = \{0\}
\]
be a maximal chain of modular elements in $L(\CA)$.
Then $X^j = X_1^j \oplus X_2^j$ for each $1 \le j \le \ell$, by \eqref{eq:product-lattice}.
As each $X^j$ is modular in $L(\CA)$, for every $Y = Y_1 \oplus Y_2 \in L(\CA)$, the sum
\[
X^j + Y = (X_1^j \oplus X_2^j) + (Y_1 \oplus Y_2) = (X_1^j + Y_1 ) \oplus (X_2^j + Y_2)
\]
belongs to $L(\CA)$. By \eqref{eq:product-lattice}, it follows that 
 $X_i^j + Y_i$ belongs to $L(\CA_i)$ for every  $1 \le j \le \ell$ and $i = 1,2$.
Since $Y \in L(\CA)$ is arbitrary, 
each $X_i^j$ is modular in $L(\CA_i)$ for every  $1 \le j \le \ell$ and $i = 1,2$.
Since $\ell = \ell_1 + \ell_2$, it follows from our construction that there are subsequences 
$1 \le j_1 < \ldots < j_{\ell_1}$ and 
$1 \le k_1 < \ldots < k_{\ell_2}$
of the integers from $1$ to $\ell$ such that 
\[
V_1 = X_1^0 < X_1^{j_1} < \ldots  
< X_1^{j_{\ell_1}} = \{0\}
\]
and
\[
V_2 = X_2^0 < X_2^{k_1} < \ldots 
< X_2^{k_{\ell_2}} = \{0\}
\]
are maximal chains of modular elements in $L(\CA_1)$ and  $L(\CA_2)$,
respectively. Thus both $\CA_1$ and $\CA_2$ are supersolvable,
as desired.

Finally, the statement on the exponents 
follows from \cite[Prop.\ 4.28]{orlikterao:arrangements}
and the fact that supersolvable arrangements are free,
\cite[Thm.\ 4.2]{jambuterao:free}.
\end{proof}

Our final observation in this section shows that 
it is easy to construct an arrangement $\CA$ which 
is  not supersolvable but still 
admits modular elements of every possible rank.

\begin{remark}
\label{rem:reducible}
Suppose that 
$\CA_i = (\CA_i, V_i)$ (for $i = 1,2$) are arrangements so that 
$\CA_1$ is supersolvable but $\CA_2$ is not and that 
$\ell_1  = \dim V_1 \ge \ell_2 = \dim V_2$.
Consider the product $(\CA,V) = (\CA_1 \times \CA_2, V_1 \oplus V_2)$.
While $\CA$ is again not supersolvable,
thanks to  Proposition \ref{prop:product-super},
it is easy to see that $L(\CA)$ admits modular elements of
every possible rank $r$
for any $0 \le r \le \ell_1 + \ell_2 = r(\CA)$.
For, let 
\[
V_1 = X_0 < X_1 < \ldots < X_{\ell_1} = \{0\}
\]
be a maximal chain of modular elements in $L(\CA_1)$.
Then, by \eqref{eq:product-lattice},
$Z_s := X_s \oplus V_2$  and
$Z'_t := X_t \oplus \{0\}$ 
belong to $L(\CA)$ for each $0 \le s \le  \ell_2 \le \ell_1$
and $1 \le t \le \ell_1$.
Since  $\codim_V Z_s = \codim_{V_1} X_s = s$ and 
$\codim_V Z'_t = \codim_{V_1} X_t +\dim V_2 = t + \ell_2$,
the rank of $Z_s$ is $s$ 
for each $0 \le s \le \ell_2$
and that of 
$Z'_t$ is $\ell_2 +t$
for each $1 \le t \le \ell_1$. 
Now let $Y = Y_1 \oplus Y_2 \in L(\CA)$ be arbitrary.
Then, since each $X_s$ is modular  in $L(\CA_1)$, 
it follows from \eqref{eq:product-lattice}
that
\[
Z_s + Y = ( X_s \oplus V_2) + ( Y_1 \oplus Y_2) = (X_s +Y_1) \oplus V_2 \in L(\CA)
\]
and 
\[
Z'_t + Y = ( X_t \oplus \{0\}) + (Y_1 \oplus Y_2) = (X_t +Y_1) \oplus Y_2 \in L(\CA).
\]
Whence $Z_s$ is modular of rank $s$ 
for $0 \le s \le \ell_2$ and 
$Z'_t$ is modular of rank $\ell_2 + t$
for $1 \le t \le \ell_1$. 
In particular, 
$L(\CA)$ admits modular elements of 
every possible rank.
\end{remark}

\section{Proofs of Theorems \ref{super} and \ref{super-rank2}}
\label{sect:proof}

Our first result allows us to conclude that $\CA(W)$ 
is not supersolvable given that the reflection arrangement of a 
suitable parabolic subgroup of $W$ is not supersolvable.
While this is an elementary observation, it is 
nevertheless a rather effective inductive tool.

\begin{lemma}
\label{lem:parabolicsuper}
Let $W$ be a complex reflection group and $\CA = \CA(W)$ its reflection
arrangement. 
Suppose that there are $X \in L(\CA)$ and $r \in \BBN$ such that
$L(\CA(W_X))$ does not admit modular elements of rank $r$ 
and that 
every $W$-orbit of elements in $L(\CA)$ of rank $r$ meets $L(\CA(W_X))$.
Then $L(\CA)$ does not admit modular elements of rank $r$.
In particular, $\CA$ is not supersolvable.
\end{lemma}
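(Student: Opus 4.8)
The plan is to show that if $L(\CA)$ contained a modular element $M$ of rank $r$, we could conjugate it into $L(\CA(W_X))$ and contradict the hypothesis there. The key point is that modularity is preserved both under the $W$-action on $L(\CA)$ and under passing down to the sublattice $L(\CA(W_X)) = L(\CA)_X$.

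First I would suppose, for contradiction, that $M \in L(\CA)$ is modular of rank $r$. Since $W$ acts on $\CA$ by linear automorphisms, it acts on $L(\CA)$ preserving the rank function, and it carries modular elements to modular elements: for $w \in W$ and $Y \in L(\CA)$ we have $w(M) + Y = w(M + w^{-1}(Y)) \in L(\CA)$ because $M + w^{-1}(Y) \in L(\CA)$ by modularity of $M$. Hence every element of the $W$-orbit of $M$ is again modular of rank $r$. By the hypothesis that every $W$-orbit of rank-$r$ elements meets $L(\CA(W_X))$, there is $w \in W$ with $w(M) \in L(\CA(W_X))$; replacing $M$ by $w(M)$, we may assume $M \in L(\CA(W_X))$ from the start.

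The second step is to check that $M$ is then modular \emph{in the sublattice} $L(\CA(W_X))$. Recall from \S\ref{ssect:refl} that $L(\CA(W_X)) = L(\CA)_X = \{Z \in L(\CA) \mid X \subseteq Z\}$. So let $Y \in L(\CA(W_X))$, i.e.\ $Y \in L(\CA)$ with $X \subseteq Y$. Since $M$ is modular in $L(\CA)$, we have $M + Y \in L(\CA)$; and since $X \subseteq Y \subseteq M+Y$, in fact $M + Y \in L(\CA)_X = L(\CA(W_X))$. Thus $M$ is modular of rank $r$ in $L(\CA(W_X))$ — note the rank of $M$ is the same whether computed in $L(\CA)$ or in $L(\CA(W_X))$, as in both cases it is $\codim_V M$. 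This contradicts the assumption that $L(\CA(W_X))$ admits no modular element of rank $r$, so $L(\CA)$ has no modular element of rank $r$ either. Finally, by Definition \ref{def:super} a supersolvable essential $\ell$-arrangement has a maximal chain of modular elements, hence a modular element of \emph{every} rank between $0$ and $\ell$; so the absence of a modular element of rank $r$ (which we may assume satisfies $0 \le r \le \ell$, the statement being vacuous otherwise) forces $\CA$ to be non-supersolvable.

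I do not expect a serious obstacle here: the only things to be careful about are that the $W$-action genuinely sends $L(\CA)$ to itself and preserves rank (immediate since $W \subseteq \GL(V)$), and the two-line verifications that modularity transports along the orbit and restricts to the lower interval $L(\CA)_X$. The mild subtlety worth a sentence is the rank bookkeeping: one should observe that if $L(\CA(W_X))$ has no rank-$r$ element at all (because $r > r(\CA(W_X))$) the hypothesis is trivially satisfied and the conclusion still follows, and that $r(\CA(W_X)) \le r(\CA)$ so this is consistent.
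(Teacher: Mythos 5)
Your proof is correct and is essentially the paper's argument run in the contrapositive: the paper takes an arbitrary rank-$r$ element $Y$, conjugates it into $L(\CA)_X$, extracts a witness $Z$ with $wY+Z\notin L(\CA)_X$ and observes that such a sum cannot lie in $L(\CA)$ since it contains $X$, whereas you assume a modular $M$ exists and push modularity along the orbit and down into the interval $L(\CA)_X$ using exactly the same two facts ($W$-equivariance and $X\subseteq M+Y\Rightarrow M+Y\in L(\CA)_X$). No gap; the rank bookkeeping you flag is handled identically in the paper.
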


\begin{proof}
Let $Y \in L(\CA)$ be of rank $r$.
Recall from Section \ref{ssect:refl} that 
$L(\CA(W_X)) = L(\CA)_X$.
Therefore, by our hypotheses there exists a $w \in W$, so that
$wY \in L(\CA)_X$, and there is a $Z \in L(\CA)_X$ so that 
$U := wY + Z \notin L(\CA)_X$.
It suffices to show that 
$U \notin L(\CA)$, as then 
$Y + w\inverse Z$ does not belong to $L(\CA)$ and so $Y$ is not modular.
So, for a contradiction, suppose that $U \in L(\CA)$.
Since both $wY$ and $Z$ belong to $L(\CA)_X$,
we have $X \subseteq wY + Z = U$ and so 
$U \in L(\CA)_X$
which is absurd, since $U \notin L(\CA)_X$, by construction.
Consequently, $U \notin L(\CA)$, as desired.
\end{proof}

Note, the first condition on $L(\CA(W_X))$
in Lemma \ref{lem:parabolicsuper} says
that $\CA(W_X)$ is not supersolvable.

\begin{lemma}
\label{lem:super}
If $W$ is of type $D_4$, $F_4$, $H_3$, or $W = G_{25}$ or $G_{26}$,
then there are no modular elements in $L(\CA(W))$ of rank $2$.
In particular, $\CA(W)$ is not supersolvable.
\end{lemma}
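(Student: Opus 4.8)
The plan is to verify, case by case, that in each of the listed groups $W$ no rank $2$ subspace in $L(\CA(W))$ is modular, by exhibiting for each (conjugacy class of) rank $2$ element $X$ an element $Y \in L(\CA(W))$ with $X + Y \notin L(\CA(W))$. Since modularity must hold for \emph{every} $Y$, a single such witness suffices. By the orbit--type labelling from \S\ref{ssect:refl}, the rank $2$ elements of $L(\CA(W))$ fall into finitely many $W$-orbits indexed by conjugacy classes of rank $2$ parabolic subgroups, so for each $W$ there are only a handful of types to treat. First I would enumerate, for each of $D_4$, $F_4$, $H_3$, $G_{25}$, $G_{26}$, the rank $2$ parabolic types --- e.g. for $D_4$ these are $A_1\times A_1$ and $A_2$; for $F_4$ they include $A_1\times A_1$, $A_2$, $B_2$; for $H_3$ they are $A_1\times A_1$, $A_2$, and the rank $2$ dihedral $I_2(5)$; and similarly the Shephard--Todd rank $2$ parabolics inside $G_{25}$ and $G_{26}$. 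Then for a representative $X$ of each such orbit I would pick a concrete hyperplane $H \in \CA(W)$ not containing $X$ and argue, using the explicit root system or the Shephard--Todd root data in \cite[App.\ C]{orlikterao:arrangements}, that $X + H$ has dimension $\ell - 1$ but is not itself a member of $\CA(W)$, whence $X$ is not modular.

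A cleaner way to organize the computation, and the one I would actually push, is to reduce everything to the braid-type/non-braid dichotomy via the inductive tool of Lemma \ref{lem:parabolicsuper} together with Lemma \ref{lem:super-rank2}. Concretely: it is easy to check directly that $\CA(D_4)$ has no modular rank $2$ element --- for $X$ of type $A_2$ one has $\CA_X \cong \CA(A_2)$ and any hyperplane $H$ through a point of the fourth coordinate direction gives $X+H$ a non-reflecting hyperplane; for $X$ of type $A_1^2$ one similarly produces a non-member sum. This handles $D_4$. For $F_4$, note $W(D_4)$ is a parabolic subgroup of $W(F_4)$, and every rank $2$ element of $L(\CA(F_4))$ is $W(F_4)$-conjugate into $L(\CA(D_4))$ except possibly those of type $B_2$; but a $B_2$-element $X$ has $\CA_X \cong \CA(B_2)$, a rank $2$ arrangement with $4$ hyperplanes, and adding a suitable fifth hyperplane of $\CA(F_4)$ through neither coordinate line of $X$ again leaves $L(\CA(F_4))$, so $X$ is not modular either; then Lemma \ref{lem:parabolicsuper} (with $r=2$) finishes $F_4$. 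For $H_3$: rank $2$ elements are of type $A_1^2$, $A_2$, or $I_2(5)$; in each case $\CA_X$ is a rank $2$ arrangement and one adds a further hyperplane of $\CA(H_3)$ to exit the lattice. For $G_{25}$ and $G_{26}$ one argues analogously using their root data; here $G_{25}$ has rank $3$ with reflections of order $3$, and its rank $2$ parabolics are of types $A_1\times A_1$ (order-$2$ reflections do not occur) — more precisely the rank $2$ parabolics are the $G(1,1,2)$- and $G_4$-type subgroups — and in each case a direct sum computation produces a subspace not in $L(\CA(G_{25}))$; similarly for $G_{26}$, which contains $G_{25}$ as a parabolic, so Lemma \ref{lem:parabolicsuper} again reduces the bulk of the work.

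The ``In particular'' clause is then immediate: by Lemma \ref{lem:super-rank2}, a supersolvable arrangement of rank $\ge 3$ must contain a modular element of rank $2$ (take $X_{\ell-2}$ in a maximal modular chain), and all five arrangements here have rank $3$ or $4$; since none admits a modular rank $2$ element, none is supersolvable. The main obstacle I anticipate is purely bookkeeping: correctly enumerating the rank $2$ parabolic classes in the exceptional groups $G_{25}$ and $G_{26}$ and in $F_4$, and for each producing an explicit witness hyperplane whose sum with $X$ can be checked \emph{not} to be a reflecting hyperplane --- this requires care with the Shephard--Todd root coordinates but no deep idea. A secondary point to get right is the verification that $W(D_4)$-conjugacy (resp. $G_{25}$-conjugacy) is enough to cover all rank $2$ orbits of the larger group, so that Lemma \ref{lem:parabolicsuper} genuinely applies with $X$ a single chosen parabolic fixed space; if some rank $2$ orbit fails to meet the smaller lattice, that orbit must be handled by a direct computation inside the larger group, as indicated above for the $B_2$-type elements in $F_4$.
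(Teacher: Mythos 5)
Your overall strategy --- enumerate the $W$-orbits of rank $2$ elements via the conjugacy classes of rank $2$ parabolic subgroups and exhibit, for one representative $X$ of each orbit, a witness $Y\in L(\CA)$ with $X+Y\notin L(\CA)$ --- is exactly the paper's strategy, and your treatment of the ``in particular'' clause via Lemma \ref{lem:super-rank2} is fine. However, there is a genuine flaw in the mechanism you propose for producing the witnesses. You repeatedly suggest taking $Y=H$ a single hyperplane of $\CA$ not containing $X$ and claiming $X+H$ is a non-reflecting hyperplane. This cannot work: if $X$ has rank $2$ and $H\in\CA$, then either $X\subseteq H$, in which case $X+H=H\in\CA$, or $X\not\subseteq H$, in which case $X+H\supsetneq H$ forces $X+H=V$. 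In both cases $X+H\in L(\CA)$, so a single hyperplane can never witness non-modularity of a rank $2$ element. The witnesses must be elements $Y$ of rank at least $2$ (in the paper they are intersections of two or three hyperplanes) chosen so that $X+Y$ is a \emph{proper} subspace, typically a hyperplane that is not reflecting, e.g.\ $X_1+(H_{b+d}\cap H_{b-d})=H_b\notin\CA(D_4)$. Finding such $Y$ for every orbit is the actual content of the lemma, and your proposal does not supply a working recipe for it.

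The second gap is in your ``cleaner'' reduction. Lemma \ref{lem:parabolicsuper} requires $W_X$ to be a parabolic subgroup, i.e.\ the pointwise stabilizer of some $X\in L(\CA)$, so a proper parabolic of a rank $\ell$ group has rank at most $\ell-1$. Consequently $W(D_4)$ (rank $4$) is not a parabolic subgroup of $W(F_4)$ (rank $4$; its rank $3$ parabolics are of types $B_3$, $C_3$, $A_2\times\widetilde A_1$, $\widetilde A_2\times A_1$), and $G_{25}$ (rank $3$) is not a parabolic subgroup of $G_{26}$ (rank $3$). So neither of the reductions $F_4\rightsquigarrow D_4$ nor $G_{26}\rightsquigarrow G_{25}$ is available; this is precisely why the paper handles all five groups by direct coordinate computation with the defining polynomials, and reserves the parabolic-reduction argument for the higher-rank groups treated in Lemma \ref{lem:inductionsuper} (e.g.\ $D_4$ inside $D_\ell$, $\ell\ge 5$, and $E_6$, $E_7$, $E_8$; $H_3$ inside $H_4$; $G_{25}$ inside $G_{32}$). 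A minor further point: the rank $2$ parabolic types of $G_{25}$ are $C(3)^2$ and $G_4$ (there are no order-$2$ reflections, hence no $A_1\times A_1$), and $F_4$ has four orbits ($A_2$, $\widetilde A_2$, $A_1\times\widetilde A_1$, $B_2$), not three; these bookkeeping slips would matter if one orbit were silently omitted.
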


\begin{proof}
Let $\CA = \CA(W)$.
Using the explicit data on the $W$-orbits in $L(\CA)$
from \cite[\S 6, App.\ C]{orlikterao:arrangements}, 
we choose an orbit representative $X$
for each $W$-orbit  in $L(\CA)$ of elements of rank $2$, 
and give an explicit  $Y \in L(\CA)$ in each case
so that $X + Y \notin L(\CA)$.
We denote the coordinate functions in $S$ simply by $a, b, c$, etc.\ 
and for $f \in V^*$, we write $H_f$ for $\ker f$.

(i). Let $W$ be of type $D_4$.
Jambu and Terao have already observed
that $\CA = \CA(D_4)$ is not supersolvable,  \cite[Ex.\ 5.5]{jambuterao:free}.
We give a different, elementary argument showing that 
$L(\CA)$ does not admit modular elements of rank $2$.
The defining polynomial of $\CA = \CA(D_{4})$ is
\begin{align*}
Q(D_4) &:= (a - b)(a + b)(a - c)(a + c)(a - d)(a + d)\\
& (b - c)(b + c)(b - d)(b + d)(c - d)(c + d).
\end{align*}
There are four $W$-orbits of elements of rank $2$ in $L(\CA)$, 
corresponding to three conjugacy classes of 
parabolic subgroups of $W$ of type $A_1^2$ and one of type $A_2$.
The conjugacy classes of  type $A_1^2$ are fused by the action of the
group of graph automorphisms of $W$.
It thus suffices to only consider one orbit in $L(\CA)$ with 
parabolic subgroup of type $A_1^2$ and another one 
with point stabilizer of type $A_2$,
where representatives of these orbits are given by 
\[
X_1  = H_{a+b} \cap H_{a-b} \quad \text{ and } \quad
X_2  = H_{a-b} \cap H_{b-c},
\]
respectively.
One readily checks that 
$X_1 + (H_{b+d} \cap H_{b-d}) = H_{b}\notin \CA$, 
as well as that 
$X_2 + (H_{a+b} \cap H_{c-d}\cap H_{c+d})  
= H_{a+b-2c} \notin \CA$.
Thus both $X_1$ and $X_2$ are not modular.

(ii). Let $W$ be of type $F_4$. 
The defining polynomial of $\CA = \CA(F_4)$ is given by 
\begin{align*}
Q(F_4) & := 
abcd(a + b)(b + c)(c + d)(b + 2c)(a + b + c)(b + c + d)(a + b + 2c) \\
& (a + b + c + d)(b + 2c + d)(a + 2b + 2c)(a + b + 2c + d)(b + 2c + 2d) \\
& (a + 2b + 2c + d)(a + b + 2c + 2d)(a + 2b + 3c + d)(a + 2b + 2c + 2d) \\
& (a + 2b + 3c + 2d)(a + 2b + 4c + 2d)(a + 3b + 4c + 2d)(2a + 3b + 4c + 2d).
\end{align*}
There are four $W$-orbits of elements of rank $2$ 
in $L(\CA)$, cf.\ \cite[Table C.9]{orlikterao:arrangements},
corresponding to the four conjugacy classes of 
parabolic subgroups of $W$ of type $A_2$, $\widetilde A_2$, 
$A_1 \times \widetilde A_1$, and $B_2$ with 
representatives 
\[
X_1  = H_a \cap H_b, \quad 
X_2  = H_c \cap H_d, \quad 
X_3  = H_{c+d} \cap H_{a + 2b + 2c + 2d}, \text{ and }\quad
X_4  = H_a \cap H_{b + c}, 
\]
respectively.
We claim that none of these elements is modular.
First note that we have $X_1 + X_3 = H_{a+2b} \notin \CA$, 
so both $X_1$ and $X_3$ are not modular.
Moreover, one checks that   
$X_2 + (H_{a + 2b + 3c + d} \cap H_{a + 2b + 2c + 2d}) 
= H_{c-d} \notin \CA$, and that 
$X_4 + (H_{b} \cap H_{a + b + c + d} \cap H_{a + 2b + 4c + 2d}) 
= H_{a-2b-2c} \notin \CA$, and so $X_2$ and $X_4$ are not modular either.

(iii). Let $W$ be of type $H_3$.
The defining polynomial of $\CA = \CA(H_3)$ is
\begin{align*}
Q(H_3) &  :=  abc(a - \omega b)(a -(\omega+1)b)
(b + c)(a + b)(a - \omega b - \omega c) \\
& (a - (\omega+1)b -(\omega+1)c)
(a + b + c)
(a -\omega b -(\omega+1)c)
(a -\omega b + c) \\
& (a + b + (\omega+2)c)
(a + b - (\omega+1)c)
(a -2(\omega+1)b -(\omega+1)c),
\end{align*} 
where $\omega = \eta^2 + \eta^3$ and $\eta$ is a primitive 
$5$-th root of unity.
There are three $W$-orbits of elements of rank $2$ in $L(\CA)$, cf.\ 
\cite[Table C.4]{orlikterao:arrangements}. 
They correspond to the three conjugacy classes of
parabolic subgroups of $W$ of types 
$I_2(5)$, $A_2$ and $A_1^2$ with
respective representatives
\[
X_1  = H_{a} \cap H_{b}, \quad  
X_2  = H_{a}\cap H_{a -\omega b -(\omega+1)c}, \text{ and } \quad 
X_3  = H_{c} \cap H_{a -2(\omega+1)b -(\omega+1)c}.
\]
One calculates that $X_1 + X_3 = H_{a-2(\omega+1)b} \notin \CA$, 
and moreover that
$X_2 + (H_{a + b} \cap H_{a -\omega b + c}) 
= H_{2a- (\omega-1)b + c} \notin \CA$, 
whence none of these elements is modular.

(iv). Let $W = G_{25}$.
The defining polynomial of $\CA = \CA(G_{25})$ is
\begin{align*}
Q(G_{25}) 
& := abc(a + b + c)(a + b + \zeta c)(a + b +\zeta^2 c)(a + \zeta b + c) \\
& (a + \zeta b + \zeta c)(a + \zeta b + \zeta^2 c) (a + \zeta^2 b + c)(a + \zeta^2 b + \zeta c)(a + \zeta^2 b + \zeta^2 c),
\end{align*} 
where $\zeta$ is a primitive $3$-rd root of unity.
There are two $W$-orbits of elements of rank $2$ in $L(\CA)$, 
\cite[Table C.6]{orlikterao:arrangements}. 
They correspond to the two conjugacy classes of
parabolic subgroups of $W$ isomorphic to 
$C(3)^2$ and $G_4$ with respective
representatives 
\[
X_1 = H_{a} \cap H_{b}  \text{ and } \quad
X_2  =  H_{c} \cap H_{a+b+c}. 
\]
Since $X_1 + X_2 = H_{a+b}\notin \CA$,
both $X_1$ and $X_2$ are not modular.

(v).
Let $W = G_{26}$. 
Falk and Randell \cite[(3.3)]{falkRandell:homotopy}
already asserted that $\CA = \CA(G_{26})$ is not supersolvable.
We give a different, elementary argument showing that 
$L(\CA)$ does not admit modular elements of rank $2$.
The defining polynomial of $\CA$ is
\begin{align*}
Q(G_{26}) &:= 
abc (a - b)(a - c)(b - c)
(a -\zeta b)(a - \zeta^2 b)
(a -\zeta c)(a -\zeta^2 c)
(b -\zeta c)(b - \zeta^2 c)\\
&(a + b + c) 
(a + b + \zeta c)
(a + b + \zeta^2 c)
(a + \zeta b + c)
(a + \zeta b + \zeta c)
(a + \zeta b + \zeta^2 c)\\
&(a + \zeta^2 b + c)
(a + \zeta^2 b + \zeta c)
(a + \zeta^2 b + \zeta^2 c),
\end{align*}
where $\zeta$ is a primitive $3$-rd root of unity.
There are three $W$-orbits of elements of rank $2$ in $L(\CA)$, 
\cite[Table C.7]{orlikterao:arrangements}. 
They correspond to the three conjugacy classes of
parabolic subgroups of $W$ isomorphic to 
$A_1 \times C(3)$, $G_4$, and $G(3,1,2)$
with representatives 
\[
X_1  = H_{b} \cap H_{a-\zeta c}, \quad
X_2  = H_{c} \cap H_{a + \zeta b + c},  \text{ and } \quad 
X_3 = H_a \cap H_b, 
\]
respectively.
One checks that 
$X_2 + X_3 = H_{a+\zeta b}\notin \CA$,
and that 
$X_1 + (H_{a-b} \cap H_{b-\zeta^2 c}) 
= H_{a-(\zeta+2)b - \zeta c} \notin \CA$, 
and so none of these elements is modular.

This completes the proof of Lemma \ref{lem:super}.
\end{proof}

Armed with Lemmas \ref{lem:parabolicsuper}
and \ref{lem:super},
we can now readily derive that a large class of 
unitary reflection groups does not admit 
a supersolvable reflection arrangement.
For $W$ of type 
$E_6$, $E_7$,  $E_8$, and $H_4$,
Falk and Randell  had raised 
this question in \cite[(2.1)]{falkRandell:homotopy}.

\begin{lemma}
\label{lem:inductionsuper}
If $W$ is of type $D_\ell$, for $\ell \ge 5$,
$E_6$, $E_7$,  $E_8$, $H_4$, or $W= G_{32}$,
then there are no modular elements in $L(\CA(W))$ of rank $2$.
In particular, $\CA(W)$ is not supersolvable.
\end{lemma}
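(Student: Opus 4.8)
The plan is to apply Lemma \ref{lem:parabolicsuper} in each case, using the groups from Lemma \ref{lem:super} as the ``bad'' parabolic subgroups whose reflection arrangements fail to have modular rank $2$ elements. Concretely, for each $W$ in the list I would exhibit a subspace $X \in L(\CA(W))$ such that the parabolic subgroup $W_X$ is of one of the types $D_4$, $F_4$, $H_3$, $G_{25}$, or $G_{26}$ already handled in Lemma \ref{lem:super}, and then verify the second hypothesis of Lemma \ref{lem:parabolicsuper}: that \emph{every} $W$-orbit of rank $2$ elements in $L(\CA(W))$ meets $L(\CA(W_X)) = L(\CA)_X$. Once both hypotheses hold, Lemma \ref{lem:parabolicsuper} immediately yields that $L(\CA(W))$ has no modular rank $2$ element, hence $\CA(W)$ is not supersolvable.

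The natural choices of parabolic subgroup are dictated by the subsystem structure of the root systems (or the parabolic subgroups of the exceptional complex reflection group). For $W$ of type $D_\ell$ with $\ell \ge 5$, I would take $X$ so that $W_X$ is of type $D_4$; one must then check against the classification of rank $2$ parabolics of $D_\ell$ (types $A_1^2$ and $A_2$, recorded in \cite[App.\ C]{orlikterao:arrangements}) that every such conjugacy class already occurs inside a $D_4$ subsystem, which is clear since $D_4 \subseteq D_\ell$ contains both an $A_1 \times A_1$ and an $A_2$ subsystem. For $E_6$, $E_7$, $E_8$ one can again use a $D_4$ parabolic, noting that every rank $2$ parabolic type occurring in an $E_n$ root system (namely $A_1^2$ and $A_2$) already occurs in $D_4$; alternatively an $F_4$ parabolic handles $E_n$ as well once one checks that the $B_2$-type rank $2$ parabolics do not arise in the simply-laced $E_n$. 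For $H_4$ I would use an $H_3$ parabolic and verify that the rank $2$ parabolic types of $H_4$ (types $I_2(5)$, $A_2$, $A_1^2$, as in \cite[Table C.5]{orlikterao:arrangements}) all occur already in $H_3$, which they do by Lemma \ref{lem:super}(iii). For $W = G_{32}$ I would use a parabolic subgroup isomorphic to $G_{25}$ and check, using the data in \cite[App.\ C]{orlikterao:arrangements}, that each of the two rank $2$ parabolic classes of $G_{32}$ is represented inside that $G_{25}$.

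The only step requiring genuine care is the second hypothesis of Lemma \ref{lem:parabolicsuper}, namely that every rank $2$ orbit meets $L(\CA)_X$; this amounts to an orbit-counting check against the tables in \cite[App.\ C]{orlikterao:arrangements}, comparing the list of rank $2$ parabolic conjugacy classes of $W$ with those of the chosen parabolic $W_X$ and confirming that the restriction map on rank $2$ orbits is surjective. This is bookkeeping rather than computation, but it is where the argument could slip if, say, a rank $2$ parabolic type of $W$ failed to embed in the chosen $W_X$ — in which case one would simply enlarge $X$ (e.g.\ pass from a $D_4$ to an $F_4$ or $H_3$ parabolic, or combine two applications of Lemma \ref{lem:parabolicsuper}). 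I expect the main obstacle to be nothing more than selecting, for each $W$, a parabolic from Lemma \ref{lem:super} large enough to capture all rank $2$ orbit types while still being a genuine parabolic subgroup of $W$; with the classification tables in hand this is routine, and the conclusion in each case follows verbatim from Lemma \ref{lem:parabolicsuper}.
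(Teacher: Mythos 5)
Your proposal is correct and follows essentially the same route as the paper: the authors likewise apply Lemma \ref{lem:parabolicsuper} with a parabolic $W_X$ of type $D_4$ for $D_\ell$ ($\ell \ge 5$) and $E_6$, $E_7$, $E_8$, with $W_X = H_3$ for $H_4$, and with $W_X = G_{25}$ for $G_{32}$, then check against the tables in \cite[\S 6, App.\ C]{orlikterao:arrangements} that every $W$-orbit of rank $2$ elements meets $L(\CA(W_X))$. The orbit-meeting verification you flag as the delicate step is handled in the paper exactly as you describe, by inspection of those tables.
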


\begin{proof}
Let $\CA = \CA(W)$.
If $W$ is one of the Weyl groups as in the statement, let $W_X$ be a
parabolic subgroup of $W$ of type $D_4$, if $W = H_4$, let $W_X = H_3$,
and if $W= G_{32}$, then let $W_X = G_{25}$. 
By Lemma \ref{lem:super}, 
$L(\CA(W_X))$ does not admit modular elements of rank $2$.
{}From the information on the $W$-orbits in $L(\CA)$ given in 
\cite[\S 6, App.\ C]{orlikterao:arrangements},
we infer that with these choices, 
every $W$-orbit of elements in $L(\CA)$ of rank $2$ 
meets the sublattice $L(\CA(W_X))$.
The desired result follows from 
Lemma \ref{lem:parabolicsuper}.
\end{proof}

\begin{proof}[Proof of Theorem \ref{super}]
Thanks to Proposition \ref{prop:product-super},
the question of supersolvability reduces 
to the case when $\CA = \CA(W)$ is irreducible.
Therefore, we may assume that $W$ is irreducible.

(i).
Let $W$ be of rank at most $2$.
Then $\CA(W)$ is supersolvable,
by Remark \ref{rem:2-arr}. 

(ii).
Let $W$ be an irreducible Coxeter group of rank at least $3$.
Fadell and Neuwirth \cite{fadellneuwirth}
proved that the
braid arrangement $\CA(A_\ell)$ is fiber-type 
and Brieskorn \cite{brieskorn:tresses} 
showed that the reflection arrangement of the
hyperoctahedral group $\CA(B_\ell)$ is also fiber-type.
Thus $\CA(A_\ell)$  and $\CA(B_\ell)$ 
are supersolvable, by \cite{terao:modular}.
It follows from Lemmas \ref{lem:super}
and \ref{lem:inductionsuper}
that for all other types 
$\CA(W)$ is not supersolvable.

(iii). Let $W$ be a monomial group $G(r,p,\ell)$.
Note that the reflection arrangements for 
$G(r,1,\ell)$ and  $G(r,p,\ell)$ for $p \ne r$ coincide.
It thus suffices to consider the case $G(r,1,\ell)$.
Let $\CA = \CA(G(r,1,\ell))$
for $r, \ell \ge 3$.
Orlik and Solomon \cite[\S 4]{orliksolomon:coxeter} showed
that $\CA$ is fiber-type,
and so $\CA$ is supersolvable.
We give an elementary, direct argument for the supersolvability of $\CA$.
Let $H_i = \ker x_i$ 
and define $X_i = H_1 \cap H_2 \cap \ldots \cap H_i$
for $1 \le i \le \ell$. Set $X_0 = V$.
We claim that
\[
V = X_0 < X_1 < \ldots < X_\ell = \{0\}
\]
is a maximal chain of modular elements in $L(\CA)$.
Let $\zeta = e^{2\pi i/r}$ be a primitive $r$-th root of unity.
The hyperplanes of $\CA$ are of the form
$H_i =\ker x_i$ for $1 \le i \le \ell$
and 
$H_{i,j}(m) = \ker(x_i - \zeta^m x_j)$, 
for $1\le i < j  \le \ell$ and $0 \le m \le r -1$.
So for  an intersection of such 
hyperplanes $Y \in L(\CA)$,  the coordinates of 
the subspace $Y$ are either $0$, some coincide,
while others differ by a power of $\zeta$.
Now $X_k$ consists of all vectors in $V$ whose first $k$ coordinates
are zero and all others are arbitrary, for $1\le k \le \ell$.
Thus the relations among the coordinates of a vector in the 
sum $X_k + Y$ of the two subspaces
are the same as for the ones in $Y$ for the first $k$
coordinates and for higher indices there are no restrictions.
So $X_k + Y$ coincides with the intersection
of those hyperplanes $H_i$ and $H_{i,j}(m)$ that define $Y$ 
subject to the requirement  that all occurring indices $i,j$ are at most $k$.
In particular, $X_k + Y$ belongs again to $L(\CA)$
and so $X_k$ is modular  for $1\le k \le \ell$, as claimed.

Note that this argument also applies to the 
braid arrangement $\CA(G(1,1,\ell))$ 
(cf.\ \cite[Ex.\ 2.33]{orlikterao:arrangements}), 
as well as to the reflection arrangement of the
hyperoctahedral group $\CA(G(2,1,\ell))$.

By Theorem \ref{thm:ind-free}  and \cite[Thm.\ 4.2]{jambuterao:free}, 
the reflection arrangements 
of the monomial groups $G(r,r,\ell)$ for $r,\ell \ge 3$ 
are not supersolvable.

(iv).
Finally, let $W$ be an irreducible, non-real, unitary 
reflection group of rank at least $3$.
By Theorem \ref{thm:ind-free} and \cite{jambuterao:free}, 
the reflection arrangements 
of $G_{24}, G_{27}, G_{29}, G_{31}, G_{33}$, and  $G_{34}$
are not supersolvable, neither are the ones of
$G_{25}$, $G_{26}$ and $G_{32}$,
by Lemmas \ref{lem:super} and \ref{lem:inductionsuper}.

This completes the proof of Theorem \ref{super}.
\end{proof}

We now attend to 
Theorem \ref{super-rank2}.
Our first observation is immediate from 
 Theorem \ref{super} and Lemma \ref{lem:super-rank2}.

\begin{lemma}
\label{lem:super2}
If $W$ is a monomial group $G(r,r,3)$ for $r \ge 3$, $G_{24}$ or $G_{27}$,
then there are no modular elements in $L(\CA(W))$ of rank $2$.
\end{lemma}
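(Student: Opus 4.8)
The plan is to invoke Theorem~\ref{super} to identify the groups in question as non-supersolvable, and then to upgrade this to the stronger statement that their reflection arrangements contain no modular rank~$2$ element at all. The three groups $G(r,r,3)$ for $r \ge 3$, $G_{24}$, and $G_{27}$ are precisely the irreducible reflection groups of rank exactly~$3$ whose reflection arrangements are not inductively free, hence not supersolvable by \cite[Thm.\ 4.2]{jambuterao:free} (for $G(r,r,\ell)$) and Theorem~\ref{thm:ind-free}. Since each such $W$ has rank~$3$, its reflection arrangement $\CA(W)$ is an essential $3$-arrangement, so Lemma~\ref{lem:super-rank2} applies directly: a $3$-arrangement is supersolvable if and only if its intersection lattice contains a modular rank~$2$ element. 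As $\CA(W)$ is not supersolvable, $L(\CA(W))$ can contain no modular element of rank~$2$, which is exactly the assertion.

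The argument is therefore a two-line deduction, and the only point requiring care is the bookkeeping: one must confirm that $G(r,r,3)$, $G_{24}$, and $G_{27}$ do indeed have rank~$3$ (immediate from the $\ell=3$ in the monomial notation and from the Shephard--Todd classification for the exceptional cases), and that their non-supersolvability has genuinely been established in the preceding material rather than merely asserted. For $G(r,r,3)$ this is part~(iii) of the proof of Theorem~\ref{super} together with \cite[Thm.\ 4.2]{jambuterao:free} and Theorem~\ref{thm:ind-free}; for $G_{24}$ and $G_{27}$ it is part~(iv) of that same proof, again via Theorem~\ref{thm:ind-free} and \cite{jambuterao:free}. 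There is no separate computation of orbit representatives to perform here, in contrast to Lemma~\ref{lem:super}, because the rank coincidence lets Lemma~\ref{lem:super-rank2} do all the work.

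I expect no real obstacle: the whole content is that for essential $3$-arrangements ``supersolvable'' and ``possesses a modular rank~$2$ element'' are synonymous, so the failure of the former for these three specific groups---already in hand from the classification---forces the failure of the latter. If one wanted a self-contained verification independent of the inductive-freeness machinery, one could instead exhibit, for a representative $X$ of each rank~$2$ orbit in $L(\CA(W))$, an explicit $Y \in L(\CA(W))$ with $X + Y \notin L(\CA(W))$, exactly in the style of Lemma~\ref{lem:super}; but given Theorem~\ref{super} this is unnecessary, and the short deduction via Lemma~\ref{lem:super-rank2} is the natural route.
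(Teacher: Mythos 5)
Your proposal is correct and is essentially identical to the paper's own argument: the authors state that this lemma ``is immediate from Theorem~\ref{super} and Lemma~\ref{lem:super-rank2},'' i.e.\ non-supersolvability of these rank-$3$ arrangements (established via inductive freeness) combined with the equivalence for $3$-arrangements between supersolvability and possessing a modular rank-$2$ element. No further comment is needed.
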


\begin{lemma}
\label{lem:super3}
If $W$ is a monomial group $G(r,r,4)$ for $r \ge 3$, $G_{29}$ or $G_{31}$,
then there are no modular elements in $L(\CA(W))$ of rank $2$.
\end{lemma}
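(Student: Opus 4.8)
The plan is to treat the monomial group $G(r,r,4)$ by a combinatorial case distinction and the two exceptional groups $G_{29}$, $G_{31}$ by the inductive tool of Lemma \ref{lem:parabolicsuper}. Set $W = G(r,r,4)$, $\CA = \CA(W)$, and let $\zeta$ be a primitive $r$-th root of unity, so that $\CA$ consists of the hyperplanes $\ker(x_i - \zeta^m x_j)$ for $1 \le i < j \le 4$ and $0 \le m \le r-1$. First I would observe that any rank $2$ element of $L(\CA)$ is an intersection of such hyperplanes involving some set $I \subseteq \{1,2,3,4\}$ of coordinates, with $|I| \le 4$. If $|I| \le 3$, then after conjugating by a permutation in $S_4 \subseteq W$ we may assume $I \subseteq \{1,2,3\}$, and the element lies in $L(\CA)_X$, where $X = \{x_1 = x_2 = x_3 = 0\}$ is the rank $3$ element of $L(\CA)$ (e.g.\ $X = \ker(x_1-x_2)\cap\ker(x_1-\zeta x_2)\cap\ker(x_1-x_3)\cap\ker(x_1-\zeta x_3)$) whose parabolic $W_X$ is the copy of $G(r,r,3)$ on the coordinates $x_1,x_2,x_3$. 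By Lemma \ref{lem:super2}, $L(\CA(W_X)) = L(\CA)_X$ admits no modular element of rank $2$, and the argument from the proof of Lemma \ref{lem:parabolicsuper} then shows that such an element is not modular in $L(\CA)$.

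It remains to handle a rank $2$ element with $|I| = 4$: this necessarily has the form $Z = \ker(x_i - \zeta^a x_j) \cap \ker(x_k - \zeta^b x_l)$ with $\{i,j\}\cap\{k,l\} = \emptyset$, of type $A_1 \times A_1$; relabelling, take $(i,j,k,l) = (1,2,3,4)$. Here I would argue directly. Put $Y := \ker(x_1 - x_3) \cap \ker(x_2 - \zeta^{b-a} x_4) \in L(\CA)$. A short computation shows $Z \cap Y$ is spanned by $(\zeta^b, \zeta^{b-a}, \zeta^b, 1)$, so $Z + Y$ has rank $1$, and in fact $Z + Y = \ker(x_1 - \zeta^a x_2 - x_3 + \zeta^b x_4)$. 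Since this defining form has all four coefficients nonzero, $Z + Y$ is not a hyperplane of $\CA$; as every rank $1$ member of $L(\CA)$ is a hyperplane of $\CA$, it follows that $Z + Y \notin L(\CA)$, whence $Z$ is not modular. This exhausts the rank $2$ elements of $L(\CA(G(r,r,4)))$.

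For $W = G_{29}$ and $W = G_{31}$ I would apply Lemma \ref{lem:parabolicsuper}: each of these groups has a parabolic subgroup $W_X$ of type $G(4,4,3)$, and $L(\CA(W_X))$ carries no modular rank $2$ element by Lemma \ref{lem:super2}. Reading off the $W$-orbits of rank $2$ elements of $L(\CA(W))$ from \cite[\S 6, App.\ C]{orlikterao:arrangements}, one checks that each such orbit meets $L(\CA(W_X)) = L(\CA)_X$, so Lemma \ref{lem:parabolicsuper} gives the claim. (Should a single parabolic of type $G(4,4,3)$ fail to capture every rank $2$ orbit, one falls back on the direct method of Lemma \ref{lem:super}, choosing for each rank $2$ orbit representative $X$ an explicit $Y$ with $X + Y \notin L(\CA)$ from the defining polynomials in \cite[App.\ C]{orlikterao:arrangements}.)

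The step that is not purely formal is the $A_1 \times A_1$ orbit of $G(r,r,4)$: because its elements use all four coordinates, they lie in the lattice of no proper non-supersolvable parabolic, so Lemma \ref{lem:parabolicsuper} cannot close this case and the ad hoc choice of $Y$ above is genuinely needed; the accompanying point requiring care is to confirm that the dichotomy $|I| \le 3$ versus $|I| = 4$ really does account for all rank $2$ elements of $L(\CA)$, and for $G_{29}$, $G_{31}$ to verify from the tables that every rank $2$ orbit does meet the chosen $G(4,4,3)$-sublattice.
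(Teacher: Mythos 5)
Your treatment of $G(r,r,4)$ is correct and essentially the paper's argument: reduce the orbits of types $A_2$ and $I_2(r)$ to the $G(r,r,3)$ parabolic via Lemma \ref{lem:super2} and the argument of Lemma \ref{lem:parabolicsuper}, then kill the $A_1^2$ orbit by hand. Your explicit witness $Y = \ker(x_1-x_3)\cap\ker(x_2-\zeta^{b-a}x_4)$ with $Z+Y = \ker(x_1-\zeta^a x_2 - x_3+\zeta^b x_4)$ checks out (the paper uses the single representative $H_{a-b}\cap H_{c-d}$ and adds $H_{b-c}\cap H_{a-d}$ to get $H_{a-b+c-d}$, then invokes the $W$-action; yours handles every representative uniformly), and your dichotomy $|I|\le 3$ versus $|I|=4$ does exhaust the rank $2$ elements.

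The gap is in $G_{29}$ and $G_{31}$. Your primary claim --- that every rank $2$ orbit meets the sublattice of a $G(4,4,3)$ parabolic --- is false in both cases. For $G_{29}$ the rank $2$ orbits are of types $A_1^2$, $A_2$ and $B_2$, but $L(\CA(G(4,4,3)))$ contains only elements of types $A_2$ and $B_2$ in rank $2$, so the $A_1^2$ orbit escapes; the paper must exhibit the explicit representative $H_{a-b+ic+id}\cap H_{a+ib-c-id}$ and a concrete $Y$ (namely $H_{a+ib-ic+d}\cap H_{b-d}$) whose sum is the non-member $H_{a+(2i-1)b-ic-(i-2)d}$. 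For $G_{31}$ the situation is worse: one of the rank $2$ orbits has point stabilizer of type $G(4,2,2)$, a type that simply does not occur among rank $2$ parabolics of $G(4,4,3)$, so no choice of such a parabolic can capture that orbit; the paper in fact treats all three orbits of $G_{31}$ ($A_1^2$, $A_2$, $G(4,2,2)$) by direct computation with explicit sums falling outside $\CA$. You do flag this risk and name the correct fallback (the method of Lemma \ref{lem:super}), but you never execute it: no orbit representatives and no witnesses $Y$ are produced for the orbits that the parabolic argument misses. Since those explicit computations are the actual content of the proof for these two groups, the proposal as written does not establish the lemma for $G_{29}$ or $G_{31}$.
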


\begin{proof}
Let $\CA = \CA(W)$.
As before, we denote the coordinate functions in $S$ simply by $a, b, c$, and $d$
and for $f \in V^*$, we write $H_f$ for $\ker f$.

(i). Let $W = G(r,r,4)$ for $r \ge 3$.
There are three $W$-orbits in $L(\CA)$ of elements of rank $2$
corresponding to the conjugacy classes of 
parabolic subgroups of types $A_1^2$, $A_2$ and $I_2(r)$,
cf.\ \cite[\S 6.4]{orlikterao:arrangements}.
Let $W_X = G(r,r,3)$ 
be a parabolic subgroup, where $X \in L(\CA)$ is of rank $3$.
There are two $W_X$-orbits in $L(\CA(W_X))$ of elements of rank $2$
corresponding to the conjugacy classes of 
parabolic subgroups of types $A_2$ and $I_2(r)$.
It follows from  Lemma \ref{lem:super2} and the argument in the proof of
Lemma \ref{lem:parabolicsuper} that for each $Y \in L(\CA)$ of rank $2$ whose
$W$-orbit meets $L(\CA(W_X))$, $Y$ is not modular.
This leaves the $W$-orbit in $L(\CA)$
whose elements have point stabilizer of type $A_1^2$ 
to be considered. 
One such element is given by $Y = H_{a-b} \cap H_{c-d}$.
But $Y + (H_{b-c} \cap H_{a-d}) = H_{a-b+c-d} \notin \CA$, so $Y$ is
not modular.

(ii). Let $W = G_{29}$.
There are three $W$-orbits in $L(\CA)$ of elements of rank $2$
corresponding to the conjugacy classes of 
parabolic subgroups of types $A_1^2$, $A_2$ and $B_2$,
cf.\ \cite[Table C.10]{orlikterao:arrangements}.
Let $W_X = G(4,4,3)$ 
be a parabolic subgroup for, where $X$ has rank $3$.
There are two $W_X$-orbits in $L(\CA(W_X))$ of elements of rank $2$
corresponding to the conjugacy classes of 
parabolic subgroups of types $A_2$ and $B_2$.
It follows from  Lemma \ref{lem:super2} and the argument in the proof of
Lemma \ref{lem:parabolicsuper} that for each $Y \in L(\CA)$ of rank $2$ whose
$W$-orbit meets $L(\CA(W_X))$, $Y$ is not modular.
This leaves the $W$-orbit whose point stabilizer is of type $A_1^2$ 
to be considered.
Let $i = \sqrt{-1}$. 
The defining polynomial of $\CA$ is given by
\begin{align*}
Q(G_{29}) &:= 
a b c d 
(a - b)
(a - c)
(a - d)
(b - c)
(b - d)
(c - d)\\
&
(a + c)
(a + b)
(a + d)
(b + c)
(b + d)
(c + d)\\
&
(a - b + i c + id)
(a - b + i c -id)
(a - b -i c -i d)
(a - b -i c + i d)\\
& 
(a + b + ic + id)
(a + b -i c -i d)
(a + b -i c + id)
(a + b + ic -id)\\
&
(a -ib + i c + d)
(a -i b - c -i d)
(a -i b - c + i d)
(a -i b + i c - d)\\
&
(a -i b -i c + d)
(a -i b + c -i d)
(a - ib -ic - d)
(a + ib - c + id)\\
&
(a + i b - c  -i d)
(a + ib -ic + d)
(a + ib -i c - d)
(a + i b + c + i d)\\
&
(a + ib + ic + d)
(a + ib + ic - d)
(a -i b + c + id)
(a + ib + c -i d).
\end{align*}
An element in $L(\CA)$ with point stabilizer $A_1^2$ is given by 
$Y = H_{a-b+ic+id} \cap H_{a+ib-c-id}$. One checks that
$Y + (H_{a+ib-ic+d} \cap H_{b-d})= H_{a+(2i-1)b-ic-(i-2)d}\notin \CA$,
so $Y$ is not modular.

(iii). Let $W = G_{31}$.
Let $i = \sqrt{-1}$. 
The defining polynomial of $\CA = \CA(G_{31})$ is
given by 
\begin{align*}
Q(G_{31}) &  := 
a b c d
(a - b)
(a - c)
(a - d)
(b - c)
(b - d)
(c - d)\\
&
(a + b)
(a + c)
(a + d)
(b + c)
(b + d)
(c + d)\\
&
(a - ib)
(a - ic)
(a - id)
(b - ic)
(b - id)
(c - id)\\
&
(a + ib)
(a + ic)
(a + id)
(b + ic)
(b + id)
(c + id)\\
&
(a - b - c - d)
(a - b + c + d)
(a - b + c - d)
(a - b - c + d)\\
&
(a + b + c + d)
(a + b - c + d)
(a + b - c - d)
(a + b + c - d)\\
&
(a - b - ic - id)
(a - b + ic + id)
(a - b - ic + id)
(a - b + ic - id)\\
&
(a + b - ic - id)
(a + b + ic + id)
(a + b + ic - id)
(a + b - ic + id)\\
&
(a - ib - c - id)
(a - ib + c - id)
(a - ib - c + id)
(a - ib + c + id)\\
&
(a - ib + ic + d)
(a - ib + ic - d)
(a - ib - ic + d)
(a - ib - ic - d)\\
&
(a + ib + c - id)
(a + ib - c - id)
(a + ib - c + id)
(a + ib + c + id)\\
&
(a + ib - ic + d)
(a + ib - ic - d)
(a + ib + ic - d)
(a + ib + ic + d).
\end{align*} 
There are three $W$-orbits of elements of rank $2$ in $L(\CA)$, cf.\ 
\cite[Table C.12]{orlikterao:arrangements}. 
They correspond to the three conjugacy classes of
parabolic subgroups of $W$ of types 
$A_1^2$, $A_2$, and $G(4,2,2)$ with
respective representatives
\[
X_1  = H_a \cap H_{b-c}, \quad  
X_2  = H_{a+id} \cap H_{a+b-c-d}, \text{ and } \quad 
X_3  = H_{a} \cap H_{a+ib}.
\]
One checks that 
$X_1+X_2 = H_{2a+(1+i)b-(1+i)c} \notin \CA$, so
that  $X_1$ and $X_2$ are
not modular. 
Further, one calculates that
$X_3 + (H_{a-b-c-d}\cap H_{a-ic}\cap H_{a-b-c+d}) = 
H_{2a+(i-1)b} \notin \CA$,
so $X_3$ is not modular either.

This completes the proof of Lemma \ref{lem:super3}.
\end{proof}

\begin{lemma}
\label{lem:inductionsuper2}
If $W$ is a monomial group $G(r,r,\ell)$ for $r \ge 3$, $\ell \ge 5$, 
$G_{33}$, or $G_{34}$,
then there are no modular elements in $L(\CA(W))$ of rank $2$.
\end{lemma}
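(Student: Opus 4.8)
The plan is to mirror the strategy of Lemma \ref{lem:inductionsuper}: reduce each of the three families to a parabolic subgroup already handled in Lemma \ref{lem:super3}, and invoke the inductive tool Lemma \ref{lem:parabolicsuper}. Concretely, set $\CA = \CA(W)$. If $W = G(r,r,\ell)$ with $r \ge 3$ and $\ell \ge 5$, choose $X \in L(\CA)$ of rank $\ell - 4$ so that $W_X \cong G(r,r,4)$; if $W = G_{33}$, choose $X$ with $W_X \cong G(3,3,4)$; and if $W = G_{34}$, choose $X$ with $W_X \cong G(3,3,4)$ as well (using that $G_{34}$ contains $G_{33}$, which in turn contains $G(3,3,4)$ as a parabolic). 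By Lemma \ref{lem:super3}, $L(\CA(W_X))$ admits no modular element of rank $2$. So the first hypothesis of Lemma \ref{lem:parabolicsuper} is satisfied with $r = 2$.

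The second hypothesis — that every $W$-orbit of rank-$2$ elements of $L(\CA)$ meets $L(\CA(W_X)) = L(\CA)_X$ — is where the real work sits. For the monomial family $G(r,r,\ell)$ this should be clean: the rank-$2$ parabolic subgroups of $G(r,r,\ell)$ fall into the conjugacy classes of types $A_1^2$, $A_2$, and $I_2(r)$ (as in the proof of Lemma \ref{lem:super3}(i)), and each of these types already occurs among the rank-$2$ parabolics of $G(r,r,4)$; since conjugacy of rank-$2$ parabolics is governed by type, a suitable $W$-conjugate of any rank-$2$ element lies in $L(\CA(W_X))$. For $G_{33}$ and $G_{34}$ one reads off the list of $W$-orbits of rank-$2$ elements from \cite[\S 6.4, App.\ C]{orlikterao:arrangements} (the relevant tables for $G_{33}$ and $G_{34}$) and checks, orbit by orbit, that each orbit's representative type already appears among the rank-$2$ parabolics of the chosen $G(3,3,4)$-parabolic, i.e.\ among types $A_1^2$, $A_2$, $I_2(3) = A_2$. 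The expected outcome is that every rank-$2$ parabolic of $G_{33}$ and $G_{34}$ is of type $A_1^2$ or $A_2$, so all orbits meet $L(\CA(W_X))$; this is the one point that genuinely requires consulting the tables.

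Once both hypotheses of Lemma \ref{lem:parabolicsuper} are verified, that lemma immediately yields that $L(\CA)$ has no modular element of rank $2$, which is the assertion. I expect the main obstacle to be the bookkeeping in the second step for $G_{33}$ and $G_{34}$: one must be sure that the parabolic chain $G(3,3,4) \le G_{33} \le G_{34}$ behaves as claimed and that no exotic rank-$2$ parabolic type appears in $G_{33}$ or $G_{34}$ that is absent from $G(3,3,4)$. If such a type did appear, one would instead have to supplement the argument with an explicit non-modularity computation for that orbit, exactly as was done for the leftover $A_1^2$-orbits in Lemma \ref{lem:super3}; but the structure of these imprimitive-to-exceptional inclusions makes that unlikely to be necessary.
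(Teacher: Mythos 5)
Your strategy is the paper's: verify the two hypotheses of Lemma \ref{lem:parabolicsuper} for a suitable rank-$4$ parabolic subgroup and conclude. For $G(r,r,\ell)$, $\ell \ge 5$, your choice $W_X = G(r,r,4)$ is exactly the paper's. For $G_{33}$ and $G_{34}$ the paper instead takes $W_X = W(D_4)$ and quotes Lemma \ref{lem:super}, whereas you take $W_X = G(3,3,4)$ and quote Lemma \ref{lem:super3}. Your variant can be made to work, but it carries an extra burden you do not discharge: you must show that $G(3,3,4)$ really occurs as a parabolic subgroup of $G_{33}$ (whence of $G_{34}$, since $G_{33}$ is parabolic in $G_{34}$ and a parabolic of a parabolic is parabolic). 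This is true, but it requires an argument or a table lookup of its own; the choice $W(D_4) \le G_{33}$ is read off directly from \cite[Table C.14]{orlikterao:arrangements} and also lets one cite the earlier Lemma \ref{lem:super}.

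The one genuine flaw in your write-up is the assertion that ``conjugacy of rank-$2$ parabolics is governed by type.'' That principle is false in general: $W(D_4)$ has three conjugacy classes of parabolic subgroups of type $A_1^2$, and $F_4$ has two classes of type $A_2$ (namely $A_2$ and $\widetilde A_2$) --- the proof of Lemma \ref{lem:super} has to treat these separately for exactly this reason. What Lemma \ref{lem:parabolicsuper} needs is that every $W$-\emph{orbit} of rank-$2$ elements of $L(\CA)$ meets $L(\CA(W_X))$, and two rank-$2$ elements whose stabilizers have the same type need not be $W$-conjugate; if some type split into several orbits, your argument would only reach one of them. So for each of $G(r,r,\ell)$, $G_{33}$, $G_{34}$ you must check from \cite[\S 6.4, Tables C.14, C.17]{orlikterao:arrangements} (or directly, for the monomial series) that each rank-$2$ type corresponds to a single $W$-orbit, or else that every orbit of a repeated type meets the chosen sublattice. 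This does hold here --- $G(r,r,\ell)$ with $\ell \ge 5$ has exactly one orbit of each of the types $A_1^2$, $A_2$, $I_2(r)$, and the tables give one orbit each of types $A_1^2$ and $A_2$ for $G_{33}$ and $G_{34}$ --- but it is precisely the table verification the paper points to, not a consequence of matching types. With these two points repaired, your argument is correct and essentially coincides with the paper's.
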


\begin{proof}
Let $\CA = \CA(W)$.
If $W = G(r,r,\ell)$, for $r \ge 3$, $\ell \ge 5$, $G_{33}$, and $G_{34}$, 
let $W_X$ be the parabolic subgroup $W_X = G(r,r,4)$, $W_X = W(D_4)$,
and $W_X = W(D_4)$, respectively.
By Lemmas \ref{lem:super} and \ref{lem:super3}, 
$L(\CA(W_X))$ does not admit modular elements of rank $2$.
{}From the information on the $W$-orbits in $L(\CA)$ given in 
\cite[\S 6, Table C.14, C.17]{orlikterao:arrangements},
we infer that with these choices, 
every $W$-orbit of elements in $L(\CA)$ of rank $2$ 
meets the sublattice $L(\CA(W_X))$.
The desired result follows from 
Lemma \ref{lem:parabolicsuper}.
\end{proof}

\begin{proof}[Proof of Theorem \ref{super-rank2}]

The forward implication is clear from Definition \ref{def:super}.

The reverse implication follows 
for general central arrangements of rank up to $3$
by Remark \ref{rem:2-arr} and Lemma \ref{lem:super-rank2}.
So assume that $\CA(W)$ is irreducible of rank at least $4$.
It follows readily from Theorem \ref{super} and Lemmas 
\ref{lem:super} 
through \ref{lem:inductionsuper2}
that if $\CA$ is not supersolvable, then
there are no modular elements of rank $2$ in $L(\CA)$.
\end{proof}

While Theorem \ref{super-rank2} asserts that 
an irreducible reflection arrangement is supersolvable precisely
when its lattice admits a modular element of rank $2$, 
this assertion is false for reducible reflection arrangements,
see Remark \ref{rem:reducible}.

The defining polynomials of the reflection arrangements
for $F_4$, $H_3$, $G_{25}$, $G_{26}$ and $G_{31}$ 
used in the proofs of 
Lemmas \ref{lem:super} and \ref{lem:super3} were obtained 
using the functionality for complex reflection groups 
provided by the \CHEVIE\ package in   \GAP\ 
(and some \GAP\ code by J.~Michel)
(see \cite{gap3} and \cite{chevie}).



\bigskip
\bigskip

\bibliographystyle{amsalpha}

\newcommand{\etalchar}[1]{$^{#1}$}
\providecommand{\bysame}{\leavevmode\hbox to3em{\hrulefill}\thinspace}
\providecommand{\MR}{\relax\ifhmode\unskip\space\fi MR }
\providecommand{\MRhref}[2]{%
  \href{http://www.ams.org/mathscinet-getitem?mr=#1}{#2} }
\providecommand{\href}[2]{#2}


\end{document}